\begin{document}

\newcommand{\qed}{\hfill{\setlength{\fboxsep}{0pt}
                  \framebox[7pt]{\rule{0pt}{7pt}}} \newline}
\newcommand{\eqed}{\qquad{\setlength{\fboxsep}{0pt}
                  \framebox[7pt]{\rule{0pt}{7pt}}} }
\newcommand{\st}{\,\colon\,}

\newtheorem{theorem}{Theorem}
\newtheorem{lemma}[theorem]{Lemma}         
\newtheorem{corollary}[theorem]{Corollary}
\newtheorem{proposition}[theorem]{Proposition}
\newtheorem{definition}[theorem]{Definition}
\newtheorem{claim}[theorem]{Claim}
\newtheorem{conjecture}[theorem]{Conjecture}

\newcommand{\proof }{{\bf Proof.\ }}          


\def\SS{S}
\def\rdeg{\mathop{\rm rdeg}\nolimits}


\title{Dominating Sets in Plane Triangulations}  
\author {Erika L.C. King\thanks{Department of Mathematics and Computer Science, Hobart and William Smith Colleges, Geneva, NY 14456, eking@hws.edu}\,,
Michael J. Pelsmajer\thanks{Department of Applied Mathematics,
Illinois Institute of Technology, Chicago, IL 60616, pelsmajer@iit.edu. Partially supported by NSA Grant H98230-08-1-0043.}
}

\date{\today}
\maketitle

\begin{abstract}
In 1996, Matheson and Tarjan conjectured that any $n$-vertex
plane triangulation with $n$ sufficiently large has
a dominating set of size at most $n/4$.  We prove this for graphs of
maximum degree 6.
\end{abstract}

\section{Introduction}
A {\it dominating set} $D\subseteq V$ of a graph $G$ is a set such that each vertex $v\in V$ is either in the set or adjacent to a vertex in
the set. The {\it domination number} of $G$, denoted $\gamma(G)$, is defined as the minimum cardinality of a dominating set of $G$.
In 1996, Matheson and Tarjan \cite{mattar} considered this asymptotically for {\em plane triangulations} (or {\em triangulations}, for short),
motivated by studying unstructured multigrid computations in two dimensions.
They proved that any triangulated disc $G$ with $n$ vertices has $\gamma(G)\leq n/3$. They
defined an infinite class of outerplane graphs which achieve this bound, proving the bound is sharp
(even for outerplane graphs).  However, they conjectured that one can do better if the disc is
bounded by a triangle, that is, for a triangulation.

\begin{conjecture}{\rm \cite{mattar}}\label{conj}
For $n$ sufficiently large, the domination number of any $n$-vertex triangulation is at most $n/4$.
\end{conjecture}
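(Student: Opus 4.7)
The plan is to attack Conjecture~\ref{conj} by induction on $n$, combining the discharging method with a catalogue of reducible configurations. For small $n$ (up to some constant $n_0$) I would verify the conjecture directly. For the inductive step, consider an $n$-vertex triangulation $G$ with $n > n_0$, and assume the bound holds for all smaller triangulations. The goal is to show that $G$ must contain at least one of a prescribed list of reducible configurations, each of which allows us to produce a smaller triangulation whose minimum dominating set can be extended to one of $G$ while preserving the $n/4$ bound.

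A reducible configuration specifies a vertex set $S$ and a reduction operation producing a plane triangulation $G'$ on $n - |S|$ vertices. The key property I would need is \emph{covering efficiency at least} $4$: given any dominating set $D'$ of $G'$ of size $\le (n-|S|)/4$, one must be able to extend it by adding at most $|S|/4$ vertices to obtain a dominating set of $G$. High-degree vertices are easy: a vertex $v$ with $d(v) \ge 7$ dominates $d(v)+1 \ge 8$ vertices of its closed neighborhood, so removing that closed neighborhood (and re-triangulating the resulting face) and placing $v$ in $D$ gives efficiency greater than $4$. The difficult configurations are local patterns of degree-$5$ and degree-$6$ vertices, which is precisely the setting of the max-degree-$6$ theorem proved in this paper. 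One would hope to use that theorem itself as a final building block: once all reducible configurations involving high-degree vertices have been eliminated, the remaining triangulation has maximum degree at most $6$ and the result applies directly.

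To show that reducible configurations must exist in any large $G$, I would use a discharging argument based on Euler's formula. Assigning initial charge $d(v) - 6$ to each vertex gives total charge $-12$, and well-chosen discharging rules (sending charge from high-degree vertices to their low-degree neighbors across specific face-structures) should force every vertex to end with non-negative charge in the absence of any reducible configuration, yielding the required contradiction. The principal obstacle is calibrating the reducible configurations with the discharging rules so that both are simultaneously satisfied: typical reducible configurations in plane-graph arguments give efficiencies like $2$ or $3$, not $4$, and achieving the tight $4{:}1$ ratio demands very carefully chosen reductions. A further difficulty is that re-triangulating after removing a cluster of vertices may create new high-degree vertices in $G'$, so one cannot simply peel off one reducible configuration at a time; some global argument, or a delicate batch reduction, is needed to ensure the process actually terminates at a max-degree-$6$ graph without accumulating excess cost along the way.
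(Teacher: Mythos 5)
You should first note that the statement you were given is not proved in the paper at all: it is the Matheson--Tarjan conjecture, recorded as Conjecture~\ref{conj} and left open, and the paper's actual contribution (Theorem~\ref{T:main}) is only the special case of maximum degree~$6$, proved by a completely different, non-inductive method (cutting the triangulation along a Steiner tree of the defective vertices, mapping the resulting triangulated disc into the $6$-regular infinite triangulation $G_\infty$ and pulling back its every-seventh-vertex dominating pattern, with a separate triangulated-cylinder analysis when the Steiner tree is long). So there is no paper proof for your argument to be measured against, and what you have written is a research program rather than a proof: the entire mathematical content --- the catalogue of reducible configurations with covering efficiency~$4$, the discharging rules, and the proof that every large triangulation contains a configuration from the list --- is left unspecified, as you yourself acknowledge in the final sentences.

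Beyond the missing content, the parts you do specify have concrete problems. The one explicit reduction (a vertex $v$ with $d(v)\ge 7$: delete $N[v]$, re-triangulate, add $v$ to the dominating set) does not have the extension property as stated: a dominating set $D'$ of the reduced graph $G'$ may dominate vertices on the boundary of the hole only through the re-triangulation edges, which are not edges of $G$, so $D'\cup\{v\}$ need not dominate $G$; also the boundary of the deleted region need not be a simple induced cycle, so keeping $G'$ a simple plane triangulation already requires care. The division of labor between discharging and Theorem~\ref{T:main} is also internally inconsistent: if all reducible configurations involve a vertex of degree at least~$7$, then the claim ``no reducible configuration implies every vertex ends with nonnegative charge'' is false, since geodesic domes (all degrees $5$ or $6$) contain no such configuration yet still have total charge $-12$; so the configuration list must include pure degree-$\{5,6\}$ patterns, which are exactly the cases you intended to defer to Theorem~\ref{T:main}. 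Finally, the reductions do not produce subgraphs of $G$, so applying Theorem~\ref{T:main} to the eventual maximum-degree-$6$ graph bounds only that graph's domination number; transferring the bound back through the whole reduction chain needs the unproven efficiency-$4$ property at every step, and the graph reached at the end may have fewer than $n_0$ vertices, where the theorem gives nothing. As it stands the proposal does not prove Conjecture~\ref{conj}, which remains open.
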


As they note, the octahedron, which is a triangulation, contains six vertices and cannot be dominated
with less than two vertices; hence $n>6$ is necessary.
They also defined an infinite class of triangulations that need $n/4$ vertices to be
dominated, and thus we know we cannot do better. These graphs are constructed from any number of $K_4$s drawn in
the plane, with edges added to the outer face to create a triangulation.

Domination is very widely studied (see~\cite{mono} for a recent monograph),
and upper bounds have been found for various graph classes related to triangulations.
For example, every triangulation other than $K_3$ has minimum degree $3$, $4$, or $5$, and graph classes
achieving these three minimum degree values have been studied.
Reed~\cite{Reed96} proved that every $n$-vertex graph of minimum degree $3$
has a dominating set of size at most $3n/8$.
Xing, Sun, and Chen~\cite{five} proved that minimum degree at least $5$ implies
that there is a dominating set of size at most $5n/14$, and
Sohn and Yuan~\cite{SohnYuan} show that with
minimum degree $4$ there must be a dominating set of size at most $4n/11$.
However,
none of these bounds for general graphs is as low as Matheson and Tarjan's bound of $n/3$ for triangulated discs.
As for planar graphs other than triangulations, people have primarily investigated
upper bounds for planar graphs of small diameter~\cite{GodHenII,GodHenI,MacG}
and recently there has also been a lot of work 
from the computational point of view (for example,~\cite{Fomin}). 
Recently Honjo, Kawarabayashi, and Nakamoto considered triangulations on other surfaces, extending Matheson and Tarjan's bound of $n/3$
to triangulations on the projective plane, the torus, and the Klein bottle, and to locally planar triangulations
(triangulations of sufficiently high representativity) for every other surface~\cite{HKN}.

When trying to construct a small dominating set for a given graph, it is rather intuitive that
vertices of relatively high degree are good candidates for being part of a dominating set.
The hard part ought to be in efficiently dominating those vertices that are not
adjacent to any high degree vertices.
However, this approach is not helpful for graphs with small maximum degree, which includes
many triangulations, for example, the triangulations in which all vertex degrees are $5$ or $6$.
(These are known as {\em geodesic domes}, or as the duals of {\em fullerene graphs}, and they
are of interest~\cite{Graver} partly due to applications in chemistry.)
Moreover, such graphs must be considered at some point when attacking Conjecture~\ref{conj}.
Therefore it is interesting to see what kind of approach will work on such graphs, and thus this case is our focus
for this paper. Our main result is the following theorem.

\begin{theorem}\label{T:main}
There exists $n_0$ such that for any $n\ge n_0$, an $n$-vertex triangulation with maximum degree $6$
has a dominating set of size at most $n/4$.
\end{theorem}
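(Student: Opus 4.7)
The plan is to construct a dominating set by a greedy ``cluster packing,'' exploiting how close a maximum-degree-$6$ triangulation must be to the $6$-regular triangular lattice. The target ratio $n/4$ is considerably weaker than the $n/7$ suggested by $6$-regularity, and this slack should absorb the exceptional configurations.

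\emph{Near-regularity from Euler.} Since $\Delta(G)\le 6$ and $\sum_v d(v)=2|E(G)|=6n-12$, one has $\sum_{v}(6-d(v))=12$, hence $3n_3+2n_4+n_5=12$. Thus at most $12$ vertices have degree below $6$; call these the \emph{defect} vertices. All other vertices are exactly $6$-regular, which makes the local structure of $G$ resemble the triangular lattice outside a bounded-size ``defect region.''

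\emph{Greedy phase.} I would initialize $R:=V(G)$ and $D:=\emptyset$. While some vertex $v$ satisfies $|N[v]\cap R|\ge 4$, add $v$ to $D$ and delete $N[v]\cap R$ from $R$. Each iteration increases $|D|$ by $1$ while shrinking $R$ by at least $4$, so at termination $|D|\le(n-|R|)/4$. To help with the residual, one should prefer $v$ with $|N[v]\cap R|$ as large as possible (up to $7$), so that the initial moves peel off full wheels centered at $6$-regular vertices.

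\emph{Residual structure.} When the greedy stalls, $|N[v]\cap R|\le 3$ for every $v\in V(G)$. For $v\in R$ this forces $|N(v)\cap R|\le 2$, so $G[R]$ has maximum degree at most $2$ and is therefore a disjoint union of paths and cycles. For $v\notin R$, $v$ has at most three neighbors in $R$. Combined with planarity, $\Delta(G)=6$, and the triangulation property, this severely restricts how $R$ can sit inside $G$.

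\emph{Finishing and main obstacle.} To dominate $R$ without breaking the $n/4$ bound, I would exploit the fact that long residual paths or cycles in a triangulation are flanked on both sides by vertices of $V\setminus R$ in a forced, almost-linear pattern; many of these ``border'' vertices should be usable as efficient dominators of three consecutive residual vertices at once. The constant number of defect vertices is then absorbed using the hypothesis $n\ge n_0$. The crux of the proof---and the main obstacle---is precisely this finishing step: showing that one can dominate the residual cheaply enough, via a local case analysis of the components of $G[R]$ together with their neighborhoods. Naive greedy is unlikely to give this control, so the greedy rules will probably need refinement (for instance, preferring wheels of size $7$ and breaking ties to avoid creating isolated long residual paths) in order to make the endgame analysis go through.
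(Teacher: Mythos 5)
There is a genuine gap, and it is exactly at the point you yourself flag as the ``main obstacle'': the two-phase greedy scheme cannot close the arithmetic to $n/4$, even in the most optimistic reading of the residual structure. When the greedy stalls you have $|N[v]\cap R|\le 3$ for \emph{every} vertex $v$, so from that moment on each additional dominator covers at most $3$ vertices of $R$; the endgame therefore costs at least $|R|/3$, while the budget remaining after the greedy phase is only $|R|/4$ plus whatever surplus the greedy happened to accumulate. No surplus is guaranteed: every greedy step may cover exactly $4$ vertices, and nothing in the argument forces steps that cover $5$, $6$ or $7$. So the bound you actually obtain is of the form $(n-|R|)/4+|R|/3$, which exceeds $n/4$ whenever $R\neq\emptyset$, and you have no argument that $R$ is empty or even small (it can have size linear in $n$). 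Refining the tie-breaking rules does not escape this: to reach $n/4$ by local packing you would need every dominator, including those chosen in the endgame, to account for $4$ new vertices on average, which is essentially a restatement of the theorem rather than a reduction of it. This is also why the known greedy/probabilistic bounds for bounded-minimum-degree graphs ($3n/8$, $4n/11$, $5n/14$) all sit strictly above $n/4$; purely local arguments of this kind appear not to reach the target.

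By contrast, the paper's proof is fundamentally global, and the ``slack'' between the $6$-regular ideal of $n/7$ and the target $n/4$ is spent in a very different place. It takes a minimum Steiner tree $T$ for the at most $12$ defect vertices, cuts the sphere along $T$ to obtain a triangulated disc, and shows (this is the key lemma) that the disc maps into the infinite $6$-regular triangulation $G_\infty$ preserving vertices, edges and facial triangles; the standard $1$-in-$7$ dominating pattern of $G_\infty$ is then pulled back and $V(T)$ is added, giving roughly $n/7+8\,n(T)/7$. When $n(T)$ is too large for this to beat $n/4$, a separate structural argument (via the ``marginal degree'' machinery) shows $G$ consists almost entirely of a long, thin triangulated cylinder, which is unrolled into $G_\infty$ and dominated at rate about $1$ in $7$, with the $O(w^2)$ leftover vertices added wholesale. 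If you want to salvage your approach, you would need an amortization mechanism that charges the expensive residual vertices against provably abundant cheap steps, and at present no such mechanism is identified; the defect-vertex count of $12$ helps bound exceptional local configurations but does nothing to bound $|R|$.
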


Note that
since a triangulation with maximum degree less than $6$ has at most $12$ vertices,
Theorem~\ref{T:main} could be restated to include triangulations with no degree greater than 6.

Section~\ref{S:extend} considers ways we can extend Theorem~\ref{T:main}.
Sections~\ref{S:defs} and~\ref{S:proof}
are devoted to the proof of Theorem~\ref{T:main}.
As there are a fair amount of details to work
through and verify, we first give an outline of its main steps.

\section{Sketch of the proof}
In this sketch, statements and quantities are only approximate, and claims are mostly unjustified.

Let $G$ be an $n$-vertex triangulation of maximum degree 6.
Let $U$ be the set of vertices in $G$ of degree less than 6; using
Euler's formula we can see that $|U|\le 12$.  We select a
minimum-size tree $T$ in $G$ that contains $U$ (a ``Steiner tree'').
Then, thinking of $G$
as being embedded on the sphere rather than the plane, we will cut
the surface along the edges of $T$, obtaining a triangulated disc.
Let $G'$ be the plane graph obtained; note that the edges of $G'$ along
the boundary of the disc consist of two copies of each edge of $T$.

Now let $G_\infty$ be the 6-regular infinite triangulation, as
suggested on the left of Figure~\ref{F:infinite-grid}.
The triangulated disc can be embedded in $G_\infty$ such
that vertices, edges, and 3-faces are mapped to their counterparts in $G_\infty$,
preserving incidences.  $G_\infty$ has a dominating set
arranged in a pattern that uses every seventh vertex.
We copy this pattern of vertices to $G'$, then to $G$, then add all the
vertices of $T$ to get a dominating set for $G$.  Its size is roughly $|V(G')|/7+|V(T)|$, which is
about $(n+8|V(T)|)/7$.  This suffices if $|V(T)|$ is at most $3n/32$.

\begin{figure}[ht]
\begin{center}
\includegraphics[width=.8\textwidth]{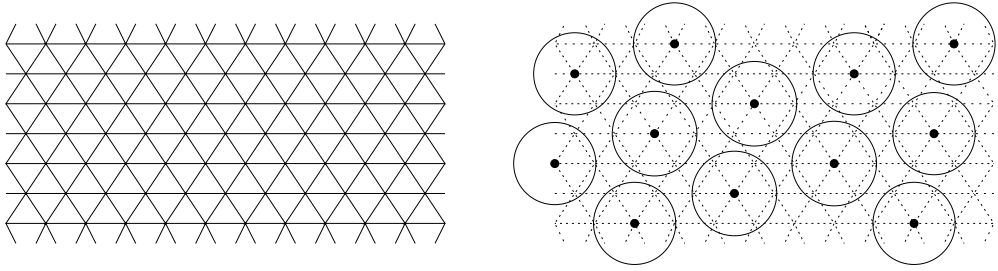}
\end{center}
\caption{$G_\infty$, with a dominating set that contains every seventh vertex}
\label{F:infinite-grid}
\end{figure}

Next, we show that when $T$ contains more than $3n/32$ vertices of $G$, it must be that
the triangulation $G$ of the sphere mostly consists of a
``triangulated cylinder'' with length $\ell$ much larger than
its width $w$, as follows:

Let $U'=U\cup \{v\in V(T)\st d_T(v)\not=2\}$.  By the choice of $T$, $|U'|\le 2|U|$.
There are $|U'|-1$ paths in $T$ with endpoints in $U'$ and internal vertices in $V(T)-U'$;
let $P$ be a longest one. Then the length of $P$ is at least $|E(T)|/(|U'|-1)$, which is
at least $|V(T)|/24$.  Let $x$ be a middle vertex of $P$.  By the choice of $T$ and $P$,
the vertices of $U'$ closest to $x$ are the endpoints of $P$.  Therefore, the distance from $x$
to $U$ is at least $|V(T)|/48$, which is at least $(3n/32)/48=n/512$.

For each
$i\ge 0$, let $G_i$ be the subgraph induced by vertices of distance at most $i$
from $x$.  Let $r$ be smallest such that $G_r$ is not a triangulated hexagon.
(See Figure~\ref{F:hex-grid}.)
$G_{r-1}$ contains about $3r^2$ distinct vertices, so $r\le \sqrt{n/3}$.
Since this is less than $n/512$ (for $n$ sufficiently large),
$G_r$ contains no vertex of $U$; all its vertices have
degree~6 in $G$.  Then, by the choice of $r$, there must be two vertices $y,y'$ on different
sides of the hexagon $G_{r-1}$ that are connected by a short path (length 2 or~3)
within $G_r$.  This yields a cycle $C$ in $G_r$ that contains
$x$, $y$, and $y'$, of length at most $2r+1$.
Since the distance from $x$ to $U$ is at least $n/512$,
and the distance from $x$ to any vertex of $C$ is at most $\sqrt{n/3}$,
the distance from $C$ to $U$ is at least $n/512 - \sqrt{n/3}$, which is at least $n/513$
(for $n$ sufficiently large).

Any cycle embedded in the plane or sphere has two sides.
Let $C_i$ be the graph induced by vertices on one side of $C$ that are at distance $i$ from $C$,
and let $C_{-i}$ be the graph induced by vertices on the other side of $C$ that are at distance $i$ from $C$, for all $i\ge 0$.
Let $j,j'$ be smallest such that $U$ intersects $C_j$ and $C_{-j'}$.
Then $\min(j,j')\ge n/513$ and every vertex in $C_i$ has degree~6 in $G$ for $-j'< i<j$.
Each $C_i$ with $-j'< i< j$ is a cycle.
Also, $|C_i|-|C_{i+1}|=k$ for all $-j'< i < j-1$, for some
constant $k\in\{-2,-1,0,1,2\}$ that depends only on the location of $y$ and $y'$ on $G_{r-1}$.
If $k\not=0$, then for $0\le i\le \min(j,j')$ we have $\min(|C_i|,|C_{-i}|)= |C|-i$.
Then $C_{2r+1}$ is empty since $|C|\le 2r+1$,
but $C_{2r+1}$ is a cycle since $2r+1<<n/513\le j$, a contradiction.
Therefore $k=0$ and $|C_i|=|C|$ for $-j'< i< j$.
Such cycles and the edges between consecutive cycles form
a triangulated cylinder of ``length'' $\ell\ge j+j'-2$ and ``width'' $w=|C|$.  Since $j+j'\ge 2n/513$ and $|C|$ is
at most
$2\sqrt{n/3}+1$, the length is much larger than the width.
Removing the triangulated cylinder yields two components.  Using the fact that $U$ intersects $C_j$ and $C_{j'}$, we show that
each component has at most $|C|^2/2$ vertices, which together is at most $w^2$ vertices.

To finish, we do something like cutting the triangulated
cylinder lengthwise, mapping it to $G_\infty$ and copying
the dominating pattern back to the cylinder. This produces a set of
size approximately $\ell(w+2)/7$ that dominates the cylinder.
The rest of $G$ has at most $w^2$ vertices,
which we add to the previous set, obtaining a set that dominates $G$.
Its size is about $\ell(w+2)/7+w^2$, which is roughly $\ell(w+2)/7$
since $\ell>>w$.
Also $n$ is approximately $\ell w +w^2 = w(\ell+w)$,
which is roughly $w \ell$ for the same reason. Thus the result follows
if $\ell(w+2)/7<\ell w/4$, which is true because $w\ge 3$.

This sketch overlooks one major issue.
In order to discuss the structure and number of vertices on each side of a cycle (and more),
we need to introduce the concept of {\em marginal degree} and prove several results about it.
This takes place in Section~\ref{S:defs}.

\section{Definitions and preliminaries}\label{S:defs}

We make our definitions for finite graphs.  (Although we will consider
one infinite graph ($G_\infty$), this will cause no confusion.)
For any graph $G$, let $n(G),e(G)$ be the number of vertices and edges, respectively.
For any $S\subseteq V(G)$, let $G[S]$ denote the subgraph induced by $S$.
For a walk $W=v_0,\ldots,v_k$ (open or closed, possibly a path or a cycle)
let $|W|$ denote its {\em length}, which equals $k$; this is the number of
edges (counting multiplicity).

A finite graph is {\it planar} if it can be drawn in the plane (with the
usual restrictions, see for example~\cite{west}); a {\it plane graph} has a fixed drawing.
If we remove the plane graph from the plane then each maximal
connected region is an open set; these are the {\em faces}.  A plane graph has one unbounded face,
called its {\em outer face}; other faces are {\em internal faces}.
An {\it outerplane graph} is a plane graph such that every vertex is incident to the outer face.

Each face of a plane graph is bounded by a set of disjoint walks, its {\em boundary}.
Note that for an outerplane graph, the boundaries of internal faces are precisely the
induced cycles of the graph.
A {\em $k$-face} is a face bounded by one closed walk of length $k$.
A {\em triangulation} is a plane graph in which every face is a $3$-face.
A {\em triangle} is a subgraph isomorphic to $K_3$.

Next we give some original definitions,
intended for the case that most vertices have degree~6.
We also provide lemmas that show how these definitions are used.

Consider a walk $W=v_0,\ldots,v_k$ in a plane graph $G$ (so $k=|W|$).
For each $i$ with $0<i<k$,
count the number of edges incident to $v_i$ from the right---more specifically,
if we order the edges incident to $v_i$ such that they are counterclockwise near $v_i$, then
count the ones after $v_iv_{i-1}$ and before $v_iv_{i+1}$---and denote this quantity $\rdeg_i(W)$.
Let $\overline{W}$ be $W$ in reverse order, so its $j$th vertex is $v_{k-j}$.  See Figure~\ref{F:example1}.
Define the {\it marginal degree of $v_i$ in $W$} to be $\rdeg_i^*(W) = \rdeg_i(W)-2$.
(For some motivation, consider $0<i<k$ such that $v_i$ has degree 6
with its incident edges drawn symmetrically around $v_i$;
then
(1) $\rdeg_i^*(W)=0$\ if and only if\
$\rdeg_{k-i}^*(\overline{W})=0$\  if and only if\  the edges $v_{i-1}v_i$ and $v_iv_{i+1}$
are colinear, and (2) $\rdeg_i^*(W)= -\rdeg_{k-i}^*(\overline{W})$ unless $\rdeg_i^*(W)=3$.)

\begin{figure}[ht]
\begin{center}
\psfrag{W}{$W$}
\psfrag{v1}{$v_{i}$}
\psfrag{v2}{$v_{i+1}$}
\includegraphics{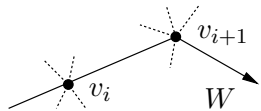}
\end{center}
\caption{An example with $\rdeg_i(W)=\rdeg_{k-i}(\overline{W})=2$,
  $\rdeg_{i+1}(W)=1$, and $\rdeg_{k-i-1}(\overline{W})=3$}
\label{F:example1}
\end{figure}

If $W$ is a closed walk (i.e., $v_k=v_0$) then we think of the indices modulo $k$ and
define $\rdeg_0(W)$ and $\rdeg_0^*(W)$ similarly using the subwalk $v_{k-1},v_0,v_1$.
For a closed walk $W$ we define $\rdeg^*(W)=\sum_{i=0}^{k-1}\rdeg_i^*(W)$,
which we call the
{\it marginal degree of $W$}.
For any connected outerplane subgraph $H$ of a plane graph $G$ with $|V(H)|\ge 2$,
its boundary is a nontrivial closed walk $W$, and without loss of generality, $W$ will
be directed so that at each vertex of $W$, the outer face of $H$ is to the right.
Thus, each vertex along the boundary has a well-defined marginal degree.  Moreover,
since $\rdeg^*(W)$ is unchanged no matter which vertex of $W$ is its start/end,
we may define the {\em marginal degree of $H$} to be $\rdeg^*(H)=\rdeg^*(W)$.

\begin{figure}[ht]
\begin{center}
\includegraphics[width=125pt]{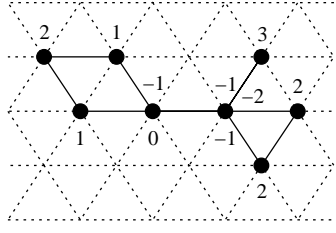}
\end{center}
\caption{A connected outerplane subgraph $H$ of $G_\infty$ with vertices of the boundary walk labeled by their marginal degrees}
\label{F:S-of-C}
\end{figure}

Say that a vertex $v_i$ on $W$ is a {\em turn} if $\rdeg_i^*(W)\not=0$; a {\em left turn} if
positive, and a {\em right turn} if negative.  Note that if $\deg(v_i)=6$, then $v_i$ is a
right turn with respect to $W$ if and only if the $(|W|-i)$th vertex of $\overline{W}$ is a left turn.
Also, $v_i$ is a {\em sharp turn} if the edges $v_iv_{i-1},v_{i}v_{i+1}$ are consecutive around $v_i$.  If $\deg(v_i)=6$,
then this occurs if and only if $|\rdeg_i^*(W)|=2$.  Note that for an outerplane subgraph $H$ of $G_{\infty}$ with boundary walk $W$,
we have $-2 \le \rdeg_i^*(W)\le 3$ for all $0\le i< |W|$.  See Figure \ref{F:S-of-C}.

For an outerplane subgraph $H$ in a plane graph $G$, the {\em interior of
$H$} is the plane minus the boundary and outer face of $H$.

\begin{lemma} \label{L:inf}
Suppose that $H$ is a connected outerplane subgraph of a plane triangulation $G$
with $|V(H)|\ge 2$.
Let $S$ be the union of $V(H)$ and the vertices in the interior of $H$
(i.e., all vertices of $G$ except those that lie in the outer face of $H$).
Then the marginal degree of $H$ is $6-\sum_{v\in S}(6-\deg(v))$.
\end{lemma}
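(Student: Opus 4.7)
The plan is to apply Euler's formula to the plane graph $G_H$ obtained by taking all vertices of $S$ together with exactly those edges of $G$ drawn in the closed region bounded by $H$ (i.e.\ on the boundary or in the interior of $H$). Since $G$ is plane, no edge of $G$ can cross the boundary walk $W$, so $G_H$ is a connected plane graph whose outer face is bounded by $W$ and whose internal faces are precisely the $3$-faces of $G$ lying in the interior of $H$.

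Let $f'$ be the number of internal faces of $G_H$. Euler's formula gives $|S|-e(G_H)+f'+1=2$, and double-counting edge-face incidences gives $3f'+|W|=2e(G_H)$. Eliminating $f'$ yields $e(G_H)=3|S|-|W|-3$, so
\[
\sum_{v\in S}\deg_{G_H}(v) \;=\; 2e(G_H) \;=\; 6|S|-2|W|-6.
\]

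Next I would relate $\deg_{G_H}(v)$ to $\deg(v)$ for $v\in S$. For any $v$ in the strict interior of $H$, every $G$-edge at $v$ lies in $G_H$ (its other endpoint is trapped inside $H$ by planarity), so $\deg_{G_H}(v)=\deg(v)$. For $v\in V(H)$, the $G$-edges at $v$ missing from $G_H$ are exactly those drawn in the outer face of $H$; by definition of $\rdeg_i(W)$, each occurrence of $v$ as some $v_i$ on $W$ contributes $\rdeg_i(W)$ such edges, and these sectors together partition all the outer-face edges at $v$. Therefore
\[
\sum_{v\in S}\deg_{G_H}(v) \;=\; \sum_{v\in S}\deg(v)\;-\;\sum_{i=0}^{|W|-1}\rdeg_i(W).
\]
Equating the two expressions for $\sum_{v\in S}\deg_{G_H}(v)$ and using $\rdeg^*(W)=\sum_i\rdeg_i(W)-2|W|$ yields
\[
\rdeg^*(H)\;=\;\rdeg^*(W)\;=\;6\;-\;\sum_{v\in S}(6-\deg(v)),
\]
as claimed.

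The main point to handle carefully is the case in which $H$ is not $2$-connected, so that $W$ visits some vertex $v$ several times or traverses some edge (a bridge of $H$) twice. One needs to verify that the sectors at such a $v$ contributed by its various occurrences on $W$ together account for each outer-face edge of $v$ exactly once, so that $\deg_{G_H}(v)=\deg(v)-\sum_{i:\,v_i=v}\rdeg_i(W)$ still holds, and that a bridge of $H$ contributes $2$ to $|W|$ and $0$ to $3f'$ (being incident to the outer face on both sides), keeping the identity $3f'+|W|=2e(G_H)$ intact. Once these bookkeeping points are in hand, the Euler-formula calculation above goes through verbatim.
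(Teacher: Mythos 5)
Your argument is correct, and it takes a genuinely different route from the paper's. The paper proves the lemma by induction: first a separate induction for trees (deleting a leaf changes the marginal degree by exactly $6-\deg$ of the deleted vertex), and then induction on the number of faces of $G$ interior to $H$, absorbing one $3$-face at a time into the boundary walk and checking that the walk's marginal degree is unchanged. You instead make one global count: apply Euler's formula to the plane graph $G_H$ consisting of $S$ and all edges of $G$ in the closed region bounded by $H$, use the fact that the internal faces of $G_H$ are exactly the $3$-faces of $G$ interior to $H$ to get $2e(G_H)=3f'+|W|$, and compare $\sum_{v\in S}\deg_{G_H}(v)$ with $\sum_{v\in S}\deg(v)$ via the observation that the $G$-edges missing at a boundary vertex are precisely those counted by $\rdeg_i(W)$ over its occurrences on $W$. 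The computation checks out (including the tree case, where $f'=0$ and $|W|=2e(H)$), and the bookkeeping you flag is exactly the right bookkeeping and is routine: the outer-face corners at a vertex $v$ correspond bijectively to the occurrences of $v$ on $W$, so those sectors partition the outer-face edges at $v$, and a bridge is traversed twice by $W$ with the outer face on both sides. What each approach buys: yours is shorter and makes the constant $6$ transparent (it is $3\cdot 2$ from Euler's formula), at the cost of invoking Euler's formula and handling degenerate boundary walks all at once; the paper's induction is more local and elementary, and its walk surgery (replacing a boundary edge by the other two sides of a $3$-face) is of the same flavor as the manipulations used later, e.g.\ in Lemma~\ref{C:rdeg}. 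In a full write-up you should add one sentence proving that $G_H$ is connected (every interior vertex reaches $V(H)$ along $G$-edges, which cannot cross $W$), since that is needed for Euler's formula in the form $|S|-e(G_H)+f'+1=2$.
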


\begin{proof}
The proof proceeds by induction on the number of faces of $G$ in the interior of $H$.
Let $W=v_0,\ldots,v_k(=v_0)$ be the boundary of $H$.

We begin with a separate inductive proof for trees only.
If $|V(H)|=2$ then $|W|=2$ and
each vertex $v_i$ has marginal degree $\deg(v_i)-3$, and
$(\deg(v_0)-3)+(\deg(v_1)-3)=6-\sum_{i\in \{0,1\}}(6-\deg(v_i))$ as desired.
Suppose that $H$ is a tree with $|V(H)|\ge 3$.  We may assume that $v_{k-1}$ is a leaf, in which case $H-v_{k-1}$ is a tree with boundary walk
$W'=v_0,\ldots,v_{k-2}(=v_0)$.  Then
$\rdeg_0^*(W') = (\rdeg_{k-2}(W) + \rdeg_{0}(W)+1) -2$,
$\rdeg_{k-1}^*(W)=\deg(v_{k-1})-3$, and the change in marginal degree, $\rdeg^*(H-v_{k-1})-\rdeg^*(H)$, is
$\rdeg_0^*(W') -\rdeg_{k-2}^*(W) -\rdeg_{k-1}^*(W) -\rdeg_{0}^*(W)$.
Since $\rdeg_i^*(W)=\rdeg_i(W)-2$ for all $i$, the change in marginal degree is $6-\deg(v_{k-1})$.
Then we apply induction to $H-v_{k-1}$, which finishes the proof for trees.

If $H$ is not a tree, then it has nonempty interior $R$ which
contains at least one face of $G$. Pick a $3$-face in $R$ that
shares an edge with $H$, and replace that edge in $W$ by the rest of the
boundary of the $3$-face.  The marginal degree of the vertex added to the
walk is $-2$ and the marginal degree of its
neighbors each increase by one, so the marginal degree of the new walk $W'$ is the same as the
marginal degree of $W$.  Also, $W'$ bounds a connected outerplane subgraph $H'$ with one less $3$-face
in its interior than $H$, so we can apply induction to $H'$ to get its marginal degree, which is equal to the marginal degree of $H$.  Since the outer faces of $H$ and $H'$ contain the exact same set of vertices of $G$, applying induction gives the desired result.
\qed
\end{proof}

When $H$ has only a single vertex $v$, we say that $H$ is bounded
by a walk of length 0 (which is simply $v$), and we define
$\rdeg^*(H)=\deg(v)$.  This makes Lemma~\ref{L:inf} true even in
the case that $|V(H)|\not\ge 2$.

\begin{lemma} \label{C:rdeg}
Suppose that $G$ is a plane triangulation and
$C$ is a cycle in $G$ with no chords on its interior.
Let $H$ be the outerplane graph induced by the neighbors of $C$ which lie in the interior of $C$,
and suppose that $H\not=\emptyset$.

Then $H$ is connected,
and the boundary of $H$ has length $|C|-\rdeg^*(H)$,
and $\rdeg^*(H)=\rdeg^*(C)+\sum_{v\in C}(6-\deg(v))$.
\end{lemma}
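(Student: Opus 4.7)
The plan is to analyze $H$ by studying, at each vertex $v_i\in C$, the neighbors of $v_i$ lying in the interior of $C$, listed in angular order as $w_{i,1},\ldots,w_{i,r_i}$ with $r_i=\deg(v_i)-2-\rdeg_i(C)$ (orienting $C$ so that its outer face is on the right). Because $C$ has no chord on its interior and $G$ is a triangulation, one first checks that $r_i\ge 1$ for every $i$: otherwise the face on the interior side at $v_i$ would be the triangle $v_{i-1}v_iv_{i+1}$, producing either an interior chord $v_{i-1}v_{i+1}$ (when $|C|\ge 4$) or exhausting the interior of $C$ (when $|C|=3$), contradicting $H\neq\emptyset$. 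The triangulation condition then gives $w_{i,j}w_{i,j+1}\in E(H)$ for each $j$, and the triangle on the interior side of the edge $v_iv_{i+1}$ identifies $w_{i,r_i}$ with $w_{i+1,1}$.

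Concatenating the paths $w_{i,1},\ldots,w_{i,r_i}$ as $i$ runs cyclically around $C$ yields a single closed walk $W'$ of length $\sum_i(r_i-1)$ in $H$ that visits every vertex of $H$ (since every vertex of $H$ is a neighbor of some $v_i$); this immediately gives that $H$ is connected. The next step would be to show that the faces of $G$ incident to $V(C)$ inside $C$ (the $|C|$ triangles $v_iv_{i+1}w_{i,r_i}$ together with the $r_i-1$ triangles $v_iw_{i,j}w_{i,j+1}$ at each $v_i$) tile an annular region between $C$ and $W'$ whose vertices all lie in $V(C)\cup V(H)$. This identifies $W'$ with the outer boundary walk of $H$, hence $H$ is outerplane, and shows that every vertex of $G$ in the interior of $C$ lies in either $V(H)$ or the interior of $H$. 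Substituting $r_i=\deg(v_i)-2-\rdeg_i(C)$ and $\sum_i\rdeg_i(C)=\rdeg^*(C)+2|C|$ rewrites $|W'|$ as $\sum_i\deg(v_i)-5|C|-\rdeg^*(C)$.

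To obtain the marginal-degree identity I would then apply Lemma~\ref{L:inf} separately to $C$ and to $H$ (using the single-vertex convention stated after that lemma if $|V(H)|=1$). Thanks to the tiling claim above, the vertex sets $S_C$ and $S_H$ of the lemma differ exactly in $V(C)$, so subtracting the two instances yields $\rdeg^*(H)=\rdeg^*(C)+\sum_{v\in V(C)}(6-\deg(v))$, which is the third claim; a routine substitution then confirms that $|C|-\rdeg^*(H)$ simplifies to $\sum_i\deg(v_i)-5|C|-\rdeg^*(C)$, matching $|W'|$ and giving the second claim. The main obstacle in this plan is justifying the tiling claim, that is, ruling out stray vertices of $G$ in the annular region between $C$ and $W'$; this is intuitive but requires a careful local argument combining the triangulation property, planarity, and the no-chord hypothesis on $C$ to verify that the enumerated triangles really do exhaust the region and that $S_H=S_C\setminus V(C)$.
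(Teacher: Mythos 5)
Your route is genuinely different from the paper's, and the parts you actually carry out are correct. The paper proves connectivity by contradiction (a chain of $3$-faces joining two alleged components of $H$ would contain a path avoiding $C$, impossible since $C$ has no interior chords), obtains the boundary length directly by double-counting the $3$-faces having a vertex on the boundary walk $W$ of $H$ and an edge on $C$ (each vertex of $W$ has $\rdeg_i(W)\ge 1$, and $\sum_i(\rdeg_i(W)-1)=|E(C)|$ gives $|W|=|C|-\rdeg^*(H)$ with no global topological claim needed), and then gets the degree identity by citing Lemma~\ref{L:inf} for $C$ and $H$. Your fan construction from the $C$-side is a legitimate alternative: the verification that $r_i\ge 1$, that $w_{i,j}w_{i,j+1}\in E(H)$, and that $w_{i,r_i}=w_{i+1,1}$ are all sound (the no-chord hypothesis is used correctly to keep the third vertex of each face off $C$), and concatenating the fans does give a closed spanning walk of $H$, so your connectivity argument is complete and arguably more constructive than the paper's. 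Your arithmetic is also consistent: $|W'|=\sum_i(r_i-1)=\sum_i\deg(v_i)-5|C|-\rdeg^*(C)$ does coincide with $|C|-\rdeg^*(H)$ once the third claim is known.

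The gap is exactly the step you flag: in your scheme both remaining claims rest on the tiling claim (that $W'$ is the boundary walk of $H$ and that $S_H=S_C\setminus V(C)$), and you do not prove it, so as written the proof of the second and third assertions is only a plan. To be fair, the claim is true and closable with the tools you already set up: at each $v_i$ the faces of $G$ inside $C$ incident to $v_i$ are precisely $v_{i-1}v_iw_{i,1}$, the triangles $v_iw_{i,j}w_{i,j+1}$, and $v_iw_{i,r_i}v_{i+1}$, so the union $R$ of these closed faces covers a neighborhood, inside the closed disc bounded by $C$, of every point of $C$; since face interiors of $G$ contain no vertices, any vertex $u$ inside $C$ with $u\notin V(C)\cup V(H)$ lies outside $R$, and an arc from $u$ avoiding $H$ can never leave the disc, because its first point of $R$ would lie on the part of the boundary of $R$ not on $C$, which consists of edges and vertices of $H$. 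This yields $S_H=S_C\setminus V(C)$ (so the two applications of Lemma~\ref{L:inf} subtract as you want) and, with one more observation matching each outer-face incidence of $H$ to one of the enumerated triangles, identifies $W'$ with the boundary walk. Note also that the paper's one-line derivation of the third claim quietly uses the same fact $S_H=S_C\setminus V(C)$, so your concern is well placed; but the paper's double-count makes the length formula independent of it, which is why its proof is shorter than what your route requires.
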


\begin{proof}
If $H$ has two components $H_1,H_2$, then there must be a sequence of
$3$-faces interior to $C$ that lead from $H_1$ to $H_2$.  However, since $C$
has no chords, it can be seen that the edges of these $3$-faces contain
a path with no vertex in $C$.  This contradicts $H_1$ and $H_2$ being
disconnected, so $H$ must be connected.

If $H$ has only one vertex $v$, then it is bounded by a
walk of length $0$ and $|C|=\deg(v)$, which implies the length of the walk is $|C|-\rdeg^*(H)$.
Otherwise
let $W$ be the walk that bounds the outer face of $H$,
and we may assume that it is oriented so that its exterior (and $C$)
is always to its right.
(As its own boundary, $C$ is a closed walk oriented so that its exterior is to the right and $W$ is to the left.)
Now $\rdeg_i(W)\ge 1$ for each $0\le i<|W|$, since otherwise the $i$th vertex of $W$ could
not be a neighbor of $C$.
Thus we may double count the 3-faces with a vertex in $W$ and an edge in $C$ to get
$\sum_{i=1}^{|W|}(\rdeg_i(W)-1)=|E(C)|=|V(C)|$.  Thus we have $\rdeg^*(H)=
|V(C)|-|W|$.

For the last part, apply Lemma~\ref{L:inf} to $C$ and $H$.
\qed
\end{proof}

\begin{lemma}\label{L:thank goodness}
Suppose that $G$ is a plane triangulation of maximum degree at most 6 and
$H$ is a connected outerplane subgraph of $G$.  For each $i\ge 0$,
let $V_i$ denote the set of vertices in $H$ or in its interior
that are at distance $i$ from $H$.  Then \[|V_i|\le
\max\{0, |V(H)|-i\cdot\rdeg^*(H) \}.\]
\end{lemma}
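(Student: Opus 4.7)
I plan to prove Lemma~\ref{L:thank goodness} by induction on $i$. The base case $i=0$ is immediate since $V_0=V(H)$, so $|V_0|=|V(H)|$, matching the bound.

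For the inductive step, the plan is to exhibit a connected outerplane subgraph $H'\subseteq G$ ``one layer deeper'' than $H$, satisfying (a) $V_j^{H'}=V_{j+1}^H$ for every $j\ge 0$, (b) $|V(H')|\le|V(H)|-\rdeg^*(H)$, and (c) $\rdeg^*(H')\ge\rdeg^*(H)$. Granting such an $H'$, the induction hypothesis applied to $H'$ at index $i$, combined with (a)--(c), yields
\[
|V_{i+1}^H| \,=\, |V_i^{H'}| \,\le\, |V(H')| - i\rdeg^*(H') \,\le\, |V(H)| - (i+1)\rdeg^*(H),
\]
as required. The natural candidate is $H':=G[V_1]$ drawn as in $G$ with outer face chosen to contain $H$. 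Property (a) is automatic, because every shortest path from a vertex of $V_{j+1}^H$ to $H$ must pass through $V_1$, so the distance from such a vertex to $H'$ is exactly $j$. Property (c) follows from Lemma~\ref{L:inf}: the identity $V(H')\cup\mathrm{interior}(H')=\mathrm{interior}(H)$ yields $\rdeg^*(H')-\rdeg^*(H)=\sum_{v\in V(H)}(6-\deg(v))\ge 0$ by the maximum-degree-6 hypothesis.

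The heart of the argument is thus establishing (b): $|V_1^H|\le|V(H)|-\rdeg^*(H)$. When the boundary of $H$ is a chordless simple cycle this is immediate from Lemma~\ref{C:rdeg}, which gives $|V_1^H|=|V(H')|\le|\partial H'|=|V(H)|-\rdeg^*(H')\le|V(H)|-\rdeg^*(H)$, the last inequality using (c). The main obstacle is the general case, in which $H$'s boundary walk may revisit vertices or edges and the interior of $H$ may split into several regions, so that $G[V_1]$ need not be connected and Lemma~\ref{C:rdeg} does not apply directly. I would handle this by an outer induction: either on the number of $3$-faces of $G$ interior to $H$, by peeling off a single $3$-face adjacent to $H$'s boundary (in the style of Lemma~\ref{L:inf}'s proof, using that this operation preserves $\rdeg^*$ while reducing the interior), or by processing each interior face region of $H$ separately---each such region is bounded by a cycle in $H$ to which Lemma~\ref{C:rdeg} applies---and then recombining the bounds via the additivity afforded by Lemma~\ref{L:inf}. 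In the second approach, the delicate piece is the accounting for vertices shared by multiple regions, each contributing a nonnegative $(6-\deg)$ correction, and for the excess of $|\partial H|$ over $|V(H)|$ when $H$ is not a simple cycle.
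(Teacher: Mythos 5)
Your outline matches the paper's strategy at the top level (induct on $i$, pass to $G[V_1]$, use Lemma~\ref{C:rdeg} to bound the size of the next layer and Lemma~\ref{L:inf} to control marginal degree), but there is a genuine gap at exactly the hard point, which you flag but do not resolve: the case where the interior of $H$ splits into several regions, so that $G[V_1]$ is disconnected. Your inductive framework requires a single \emph{connected} outerplane subgraph $H'$ satisfying (a)--(c); no such $H'$ need exist in that case (the induction hypothesis, and $\rdeg^*$ itself, are only defined for connected subgraphs), so the general inductive step cannot be reduced to properties (a)--(c) of one subgraph. Of your two proposed patches, the first (peeling one $3$-face at a time) preserves $\rdeg^*$ but scrambles the distance classes: adjoining the third vertex of a peeled face can decrease other vertices' distances to the subgraph by one, so the sets $V_i$ for the enlarged subgraph are not comparable to those for $H$ in a way that transfers the bound. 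That device works for the exact identity of Lemma~\ref{L:inf}, but there is no evident way to run it for the counting statement here.

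The second patch is the right idea---it is essentially what the paper does---but the ``delicate piece'' you defer is the actual content of the lemma. Writing $A_1,\ldots,A_m$ for the components of $G[V_1]$ (after discarding those with nonpositive contribution), each lying in a face of $H$ bounded by a cycle $C(A)$, one applies the induction hypothesis at index $i-1$ to each $A$ separately, uses Lemma~\ref{C:rdeg} to get $|V(A)|\le |C(A)|-\rdeg^*(A)$, and must then verify
\[
\sum_{A}\bigl(|C(A)|-i\,\rdeg^*(A)\bigr)\;\le\;|V(H)|-i\,\rdeg^*(H).
\]
The per-component analogue of your (c), namely $\rdeg^*(A)\ge\rdeg^*(H)$, is true but \emph{not enough}: with it one only obtains the bound $|V(H)|+2(m-1)-im\,\rdeg^*(H)$, which exceeds the target whenever $m\ge 2$ and $i\,\rdeg^*(H)<2$ (e.g., $i=1$ and $\rdeg^*(H)\le 1$). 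What is needed is the sharper consequence of Lemma~\ref{L:inf} under maximum degree $6$, namely $\sum_A\rdeg^*(A)\ge\rdeg^*(H)+6(m-1)$ (each extra region gains a full $6$), together with the outerplanarity estimate $\sum_A|C(A)|\le|V(H)|+2(m-1)$, and the observation that the gain $6i(m-1)$ absorbs the loss $2(m-1)$ precisely because $i\ge 1$. None of this appears in your proposal, so as written the inductive step is not established.
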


\begin{proof}
We proceed by induction on $i$.  The lemma is trivial for $i=0$;
assume that $i\ge 1$.  We may also assume that $V_i\not=\emptyset$.

Note that $V_1$ is the set of vertices
incident to the outer face of $G[\bigcup_{j\ge 1}V_j]$; therefore $G[V_1]$ is an outerplane
graph.  Let $\SS$ be the set of components of $G[V_1]$.

Each vertex $v\in V_i$ is
in $A$ or in the interior of $A$ for some $A\in \SS$,
and the distance from $v$ to $A$ is exactly $i-1$.
Consider an arbitrary component $A\in \SS$.
By induction, the number of vertices
in $A$ or in its interior that are at distance $i-1$ from $A$ is at most
$\max\{0, |V(A)|-(i-1)\rdeg^*(A) \}$.
Therefore
$|V_i|\le \sum_{A\in \SS} \max\{0, |V(A)|-(i-1)\rdeg^*(A) \}$.

Since $A$ lies in a single face of $G[V_0]$, we may
let $C(A)$ be the cycle that bounds that face.
By Lemma~\ref{C:rdeg},
the boundary of each $A\in \SS$ has length $|C(A)|-\rdeg^*(A)$,
so $|V(A)|\le |C(A)|-\rdeg^*(A)$.
Let $\SS'$ be the set of $A\in \SS$ for which
$|V(A)|-(i-1)\rdeg^*(A)>0$.
Then we have
$|V_i|\le \sum_{A\in \SS'} \{|C(A)| - i\cdot\rdeg^*(A) \}$.
We may assume that $\SS'\not=\emptyset$.

By applying Lemma~\ref{L:inf} to $H$ and to each $A\in \SS'$ we obtain
$\rdeg^*(H)-\sum_{A\in \SS'}\rdeg^*(A) =
6(1-|\SS'|) - \sum(6-\deg(v))$ where the sum is taken over all vertices
$v$ that are in $H$, but not in or interior to any $A\in \SS'$.  Since $G$ has
maximum degree $6$, the sum is nonnegative, so
$-\sum_{A\in \SS'}\rdeg^*(A)\le -\rdeg^*(H) +6(1-|\SS'|)$.
Then we have
$|V_i|\le \sum_{A\in \SS'} |C(A)| - i\cdot\rdeg^*(H) +6i(1-|\SS'|)$.

Since $H$ is outerplane, we can put the induced cycles of $H$ in a reasonable order,
such that each new cycle contains at most two vertices already seen; then
$\sum_{A\in \SS'} |C(A)|\le |V(H)|+2(|\SS'|-1)$.
Therefore it suffices to show that $2(|\SS'|-1)+6i(1-|\SS'|)\le 0$.
Since $|\SS'|\ge 1$ and $i\ge 1$, $(2-6i)(|\SS'|-1)\le 0$.
\qed
\end{proof}

To end this section, we describe the structure of a triangulated cylinder and how it arises.

\begin{lemma} \label{C:cylinder0}
Suppose that $G$ is a plane triangulation of maximum degree 6 and
$W$ is a closed walk in $G$ that bounds a connected outerplane subgraph $H$
with $V(H)=V(W)$ and $E(H)=E(W)$ such that $W$ has either
either (i) no turns or (ii) exactly one right turn
and exactly one left turn.  Then either $H$ is a cycle of length $|W|$,
$H$ is a path of length $|W|/2$, or $H$ is the union of a path of length~$p$
and a cycle $B$ that intersect at one endpoint of the path,
such that $|B|=|W|-2p$, $0< p < |W|/2$, and $\rdeg^*(B) \ge 1$.
\end{lemma}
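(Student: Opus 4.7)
The plan is to use the hypothesis $E(H)=E(W)$ to pin down a cactus structure on $H$, then invoke the turn budget from~(i) or~(ii) together with Lemma~\ref{L:inf} to force the block decomposition into one of the three listed forms.

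First, since $W$ is the outer-face boundary walk of $H$ and traverses every edge of $H$, no edge can be a chord of a $2$-connected block (such chords lie only on inner faces). Thus every block of $H$ is either a bridge or a cycle, and $H$ is a cactus. For each vertex $v$, the walk $W$ visits $v$ exactly $\beta_v$ times, where $\beta_v$ is the number of blocks of $H$ at $v$, with each visit corresponding to one outer-face corner at $v$ between cyclically consecutive blocks. Writing $d_H(v)$ for the degree of $v$ in $H$ and $e_v^{\mathrm{out}}$ for the number of edges of $G$ at $v$ lying strictly inside outer-face sectors of $H$, a direct corner-counting argument gives $\sum \rdeg_i^*(W) = e_v^{\mathrm{out}}-2\beta_v$ (summed over the visits of $v$), together with the bound $e_v^{\mathrm{out}}\le \deg_G(v)-d_H(v)\le 6-d_H(v)$.

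Now I would carry out a vertex-by-vertex case analysis. In case~(i) every marginal degree is $0$, so $e_v^{\mathrm{out}}=2\beta_v$ and hence $d_H(v)+2\beta_v\le 6$; this immediately rules out any cut vertex lying in a cycle block and caps $\beta_v\le 2$, so either $H$ is a single cycle or $H$ is a tree whose block-cut tree is a path, making $H$ itself a path of length $|W|/2$. In case~(ii), the same bounds combined with the fact that only one right and one left turn exist force $\beta_v\le 2$ everywhere and show that any cut vertex incident to a cycle block must host a right turn; after excluding the configuration of two cycle blocks sharing a cut vertex, $H$ is forced to be a single cycle, a path, or a lollipop (cycle $B$ with a path of length $p$ attached at one vertex). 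For the lollipop, $|B|=|W|-2p$ is immediate since the outer-face walk traverses each cycle edge once and each bridge twice, and $0<p<|W|/2$ follows from $p\ge 1$ and $|B|\ge 3$.

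For the final claim $\rdeg^*(B)\ge 1$ in the lollipop case, I compute the contribution of the cut vertex $v$ by observing that $B$'s outer-face sector at $v$ contains both outer-face sectors of $H$ at $v$ together with the attached path edge $vp_1$; this gives $\rdeg_v^*(B)=r_1+r_2+3$, where $r_1,r_2$ are the marginal degrees of $W$'s two visits at $v$. Since the right turn at $v$ forces $r_1+r_2\in\{-2,-1\}$ (or, in the sub-case where both turns of $W$ occur at $v$, the sum equals $-1$), and any left turn on $B$ elsewhere contributes non-negatively, a quick check of sub-cases yields $\rdeg^*(B)\ge 1$. The main obstacle is the sub-case in case~(ii) that excludes two cycle blocks meeting at a single cut vertex: the local marginal-degree budget at such a vertex $v$ permits a sharp right turn there, so the contradiction must be extracted by applying Lemma~\ref{L:inf} to each cycle separately, reconciling the shared contribution of $v$ (forced to have $\deg_G(v)=6$ with no non-$H$ edges into either cycle interior) against the global requirement that the unique left turn be hosted at a location consistent with the marginal-degree totals of both cycles.
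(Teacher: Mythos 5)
Most of your outline tracks the paper's own argument: the cactus structure of $H$, the corner count $\sum_i \rdeg_i^*(W)=e_v^{\mathrm{out}}-2\beta_v$ at each vertex, the elimination of $\beta_v\ge 3$ and of cut vertices on cycle blocks without a right turn, the edge count $2p+|B|=|W|$, and the identity $\rdeg_v^*(B)=r_1+r_2+3$ leading to $\rdeg^*(B)\ge 1$ (your sub-case check there is in fact more complete than the paper's, which simply asserts the two visits of $W$ at $v$ have $\rdeg$ values $1$ and $2$). The genuine gap is exactly the case you flag and postpone: two cycle blocks meeting at one cut vertex $v$, with a sharp right turn at $v$. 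Your proposed repair---apply Lemma~\ref{L:inf} to each cycle separately and ``reconcile''---cannot yield a contradiction: from the walk data one only gets $\rdeg^*(B_i)\ge 2$ for each cycle, and Lemma~\ref{L:inf} then merely says the degree deficiencies on or inside $B_i$ sum to $6-\rdeg^*(B_i)\le 4$, which nothing in the hypotheses forbids, since the lemma does not assume the vertices of $W$ (or interior vertices) have degree $6$. Indeed, the configuration is realizable under the hypotheses as stated: take the $9$-vertex sphere triangulation on $\{v,a,b,c,d,e,f,p,z\}$ with facial triangles $vab$, $vbc$, $vcd$, $vde$, $vef$, $vfa$, $abp$, $bcp$, $cdp$, $dpz$, $dze$, $ezf$, $fzp$, $fap$ (so $v,p$ have degree $6$, $d,f$ degree $5$, and $a,b,c,e,z$ degree $4$), embedded with outer face $ezf$. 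Letting $H$ be the union of the two facial triangles $vab$ and $vcd$, its boundary walk has marginal degree $0$ at $a$, $b$, $c$ and at one visit of $v$, a sharp right turn ($-2$) at the other visit of $v$ (the empty corner containing the face $vbc$), and a left turn ($+1$) at $d$; so hypothesis~(ii) holds, yet $H$ is two cycles sharing a vertex, not a cycle, path, or cycle-plus-path. So no amount of reconciling marginal-degree totals from the stated hypotheses alone can close your case.

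What is actually needed is the extra property that every visit of $W$ satisfies $\rdeg_i(W)\ge 1$, i.e., no corner of the outer face of $H$ at a vertex of $W$ is empty of edges. This is what the paper's proof tacitly uses when it asserts that a cut vertex in $k$ blocks is incident to at least $2k-1$ edges on the exterior of $H$: with that bound, a cut vertex on two cycle blocks would need degree at least $4+3=7$, killing your problem case at once (and also killing the sharp-right values $r_1=-2$ you allow in the lollipop analysis). The property does hold wherever the lemma is applied (Lemmas~\ref{C:cylinder1} and~\ref{C:cylinder2}): there $H$ is the subgraph induced by the neighbors of a cycle, and an empty corner would force, via the triangular face of $G$ in that corner, an edge of the induced subgraph lying in its own outer face---the same observation as the $\rdeg_i(W)\ge 1$ step in the proof of Lemma~\ref{C:rdeg}. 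So to make your proof (or the paper's) airtight you should either add $\rdeg_i(W)\ge 1$ (equivalently, ``no sharp right turns'') as a hypothesis, or carry out the argument in the induced-neighbor setting where it is available; your current plan for the two-cycle case is not salvageable as described.
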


\begin{proof}
By the choice of $H$, it has no edges on its interior;
hence, any $2$-connected block of $H$ is a cycle.
If $H$ is $2$-connected, then $H$ is bounded by a cycle, so $H=W$.
Thus, we may assume that $H$ is not $2$-connected.

Let $v$ be a cut-vertex in $H$.  Let $k$ be the number of
blocks of $H$ that contain $v$; then $k\ge 2$.  Also, $v$ appears
exactly $k$ times in $W$.  Since $W$ has at most one right and left turn each,
$v$ is incident to $2k$, $2k-1$, or $2k+1$ edges on
the exterior of $H$.  Also, $v$ is incident to at least one edge in
each block of $H$, so $\deg(v) \ge (2k-1)+k=3k-1$.  Since $\deg(v)\le
6$, $k\le 7/3$.  Hence, $k=2$.
Let $B$ and $B'$ be the blocks of $H$ that contain $v$.

If $B$ is $2$-connected, then $B$ is a cycle, so $v$ is incident to
exactly two edges in $B$.  Since $v$ has degree at most~$6$ and
$v$ is incident to at least one edge in $B'$, $v$ is incident to
at most~$3$ edges that are not in $H$.  By the previous paragraph, $v$ is
incident to at least~$3$ edges on the exterior of $H$, since $2k-1=3$.
Therefore, $v$ is incident to exactly one edge in $B'$ and $v$ is
incident to exactly $3$ edges on the exterior of $H$.
Then $B'$ is a single-edge block,
and $\rdeg_i(W)=1$ and $\rdeg_j(W)=2$ for the two times where $W$
passes through $v$.  Since there is at most one $i$ such that
$\rdeg_i(W)=1$, it follows that $v$ is the only cut-vertex of $H$
in a $2$-connected block of $H$, $B$ is the only
$2$-connected block of $H$, and $B$ is a leaf-block of $H$.  Every
other cut-vertex of $H$ is in two single-edge blocks.

It follows that either $H$ is a path of length $p$ or $H$ is the union of
a cycle $B$ and a path of length $p$ that intersects $B$ at one of its endpoints.
Each edge in the path appears in $W$ twice, and each edge in $B$
appears in $W$ once, so if $H$ is a path then $2p=|W|$
and otherwise
$2p+|B|=|W|$.

If $H$ is the union of a path and a cycle $B$ that
intersect at $v$, then $v$ is incident to exactly $4$ edges on the
exterior of $B$:  $3$ edges on the exterior of $H$ and one edge in
the single-edge block of $H$ that contains $v$.
Then $B$ has a sharp left turn at $v$.  Other turns in $B$ are also
in $W$, so $\rdeg^*(B) \ge 1$.
\qed
\end{proof}

\begin{lemma} \label{C:cylinder1}
Suppose that $G$ is a plane triangulation of maximum degree 6 and
$C$ is a cycle with either (i) no turns or (ii) exactly one right turn
and exactly one left turn, and all its vertices have degree 6.
Let $S$ be the set of neighbors of $C$ that lie in the interior of $C$.

Then $C$ has no chords on its interior, $S\not=\emptyset$, and
$G[S]$ is a connected outerplane graph.
Also, $G[S]$ is bounded by a walk $W$ such that $|W|=|C|$
and the cyclic sequence of turns on $W$
has the same pattern of turns as the cyclic sequence of turns on $C$. 

Moreover, if every vertex in $S$ has degree~$6$, then $W$ is a cycle.
\end{lemma}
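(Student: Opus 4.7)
The plan is to prove the claim in three parts: rule out interior chords, extract the structure of $H=G[S]$ via Lemma~\ref{C:rdeg}, and handle the moreover clause via Lemma~\ref{C:cylinder0}. I expect Step~1, showing there is no interior chord, to be the main obstacle.

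\textbf{Step 1.} Suppose an interior chord $uv$ exists; pick one minimizing its shorter arc $\alpha$ between $u$ and $v$ on $C$. Since $C$ has at most two turn vertices, I may assume $\rdeg^*_u(C)=0$, so $u$ has exactly two interior-side edges, one being the chord. The 3-face at $u$ between the chord and $u$'s cycle-edge on the $\alpha$-side forces an edge from $u_\alpha$ (the $\alpha$-neighbor of $u$) to $v$; minimality of the chord then forces $u_\alpha$ to be adjacent to $v$ on $C$, giving $|\alpha|=2$. Write $\alpha=u,w,v$. A symmetric analysis at $v$ (with extra care when $v$ or $w$ is the turn vertex in case~(ii)) shows $|\beta|=2$, so $|C|=4$. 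In this 4-cycle configuration, $u$'s and $v$'s interior edges are entirely committed (to the chord and to the interior neighbors of $w$ and of the opposite vertex $w'$ forced by the triangulation at $w,w'$), and the 3-face on the triangle-interior side of the chord then has no valid third vertex, contradicting the triangulation property.

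\textbf{Step 2.} Lemma~\ref{L:inf} applied to $C$ gives $\sum_{v\in\mathrm{int}(C)}(6-\deg(v))=6-\rdeg^*(C)\ge 5$, so $\mathrm{int}(C)$ is nonempty; combined with Step~1, every inside 3-face of $G$ incident to a $C$-edge has its third vertex in $S$, so $S\neq\emptyset$. Lemma~\ref{C:rdeg} then gives $H=G[S]$ connected and outerplane, with $\rdeg^*(H)=\rdeg^*(C)$ and boundary walk $W$ of length $|C|-\rdeg^*(C)$. Tracing the outer face of $H$ alongside $C$ yields a natural cyclic bijection between the $|C|$ inside 3-faces along $C$ and the $|W|$ steps of $W$; this bijection preserves the turn pattern (non-turns to non-turns; same-sign turns of matching magnitude). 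Hence $\rdeg^*(C)=0$ (in case~(ii), the magnitude-preserving matching forces the left and right turns to cancel), so $|W|=|C|$ and $W$'s cyclic turn pattern matches $C$'s.

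\textbf{Step 3.} Assume every $v\in S$ has degree~$6$. Then $W$ satisfies the hypothesis of Lemma~\ref{C:cylinder0}, so $H$ is either a cycle of length $|W|$, a path of length $|W|/2$, or a path-plus-cycle. In the latter two cases, $H$ has a leaf $x$, whose unique $H$-edge is traversed by $W$ twice at $x$; since $\deg(x)=6$, this forces $\rdeg^*_x(W)=5-2=3$, a turn of magnitude~$3$. But $W$'s turn magnitudes match $C$'s (at most~$2$), so a magnitude-$3$ turn is impossible. Hence $H$ is a cycle, and so is $W$. The crux is Step~1, whose contradiction relies on careful case analysis of the cyclic edge-order at $u,v,w$, combined with the minimality of the shortest chord, to reduce to and then rule out the $|C|=4$ configuration.
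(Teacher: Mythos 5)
Your plan follows the paper's skeleton (chordlessness, then Lemma~\ref{C:rdeg}, then Lemma~\ref{C:cylinder0} for the ``moreover'' clause), and your Steps 2--3 are essentially the paper's argument. The problem is Step~1, which is the only genuinely new part and which has real gaps. First, ``I may assume $\rdeg^*_u(C)=0$'' is not a legitimate reduction: the two endpoints of the arc-minimal chord could be exactly the two turn vertices of case~(ii). (This is repairable, since at most one endpoint is a right turn and a left-turn endpoint has even fewer interior edges.) The serious gap is the next sentence: the $3$-face ``between the chord and $u$'s cycle-edge on the $\alpha$-side'' need not have $v$ as its third vertex, because $uu_\alpha$ and $uv$ need not be consecutive in the rotation at $u$. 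The second interior edge at $u$ can lie angularly between $uu_\alpha$ and $uv$ and end at a vertex of $G$ lying \emph{inside} the region bounded by the chord and $\alpha$; minimality of the chord forbids further chords in that region but says nothing about interior vertices, whose degrees are unconstrained at this stage of the lemma (only the ``moreover'' clause assumes degree~6 for neighbors of $C$). In that configuration no edge $u_\alpha v$ is forced, so $|\alpha|=2$ does not follow. Moreover, even granting $|\alpha|=2$, the claim that a ``symmetric analysis at $v$'' yields $|\beta|=2$ and hence $|C|=4$ is unsupported: minimality gives no leverage on the $\beta$-side, nothing prevents a chord from cutting a short arc off a long cycle, and the concluding ``no valid third vertex'' contradiction is asserted rather than proved.

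For contrast, the paper does not localize at one minimal chord. It takes $H$ to be $C$ together with all interior chords, uses the weak dual of $H$ (a tree with at least two leaf-faces), shows that inside each leaf-face some vertex of $C$ other than the chord's endpoints must send an edge into that face (degree $6$ minus two cycle edges minus at most three exterior edges), and then plays the two leaf-faces against the hypothesis that $C$ has at most one right turn: endpoints of chords bounding leaf-faces would be forced to carry three interior edges, i.e.\ to be right turns, in two different places. That counting of interior edges at (potential) turn vertices is exactly what your sketch omits; to salvage the minimal-chord approach you would need an analogous analysis of the short-arc region (for instance, when $|\alpha|=2$ and the region is a single face, the middle vertex $w$ becomes a sharp left turn, and the case where the region contains interior vertices must then be handled separately), not an appeal to symmetry at $v$. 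As written, Step~1 does not establish chordlessness, and since Steps 2--3 depend on it, the proposal has a genuine gap.
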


\begin{proof}
Let $H$ be the union of $C$ with all the chords of $C$ that lie in the interior of $C$.
Then $H$ is an outerplane subgraph of $G$.
If $H\not= C$, then the weak dual of $H$ is a nontrivial tree, with at least two
leaf-faces.  Let $uv$ be a chord incident to a leaf-face $f$ of $H$;
then $f$ is bounded by a cycle $B$, and $B-uv$ is a $u,v$-path
in $C$.
Since a face has at least three sides, this path contains
another vertex, $x$.
Since $\deg_G(x)=6$ and $\rdeg_i(C)\le 3$ for $x=v_i$, $x$ must be incident to an edge $xy$ in $f$,
and since $f$ contains no chords of $C$, $y$ must be in $f$.
Since $G$ is a triangulation, $u$ and $v$ must also be
incident to edges in $f$.  Let $f'$ be what remains of the interior of $C$ once $f$ and the edge $uv$ are removed.
Then $f'$ contains another leaf-face of $H$, and by the same reasoning as before
there is a vertex $z$ in $V(C)-V(B)$ with a neighbor in $f'$,
and either (i) $u$ and $v$ both have neighbors in $f'$, or
(ii) there is a chord incident to $u$ or $v$ in $f'$.
In case (i), $u$ and $v$ are each incident to three edges on the interior of $H$,
so they must both be right turns in $C$, a contradiction.
So we may assume we have case (ii) and that $v$ is incident to a chord of $H$.  Since $v$ is incident to
two chords of $H$ and an edge in $f$, $v$ must be a right turn of $C$, incident
to no other edges in the interior of $H$.  However, the argument could be applied
to a different leaf-face $f^*$ of $H$, and since there is only one right turn of $C$,
$v$ would have to be incident to an edge in $f^*$ as well.
This is a contradiction, so $H=C$.
That is, $C$ has no chords on its interior.

$C$ must be incident to edges on its interior, because $\deg_G(v_i)=6$ and
$\rdeg_i(C)\le 3$ for every vertex $v_i$ on $C$.
Therefore, $S\not=\emptyset$.
By Lemma~\ref{C:rdeg}, $G[S]$ is a connected outerplane graph bounded by
a walk $W$ with $|W|=|C|$. Note that when $C$ has no right or
left turns, the cyclic sequence of
marginal degrees of $W$ is the same as the cyclic sequence of the marginal degrees of $C$.
Then note that if we change $C$ to add a right turn and a left turn, $W$ gains a
right and a left turn in corresponding spots.

Suppose that $W$ is not a cycle.  By Lemma~\ref{C:cylinder0},
$G[S]$ has a path with an endpoint $v$ which has degree~1 in $G[S]$.
According to the sequence of turns along $W$, $v$ is incident
to 1, 2, or 3 edges on its exterior, so $v$ is incident
to at most~4 edges overall.  Thus, if every vertex of $S$
has degree~$6$ in $G$, then $W$ is a cycle.
\qed
\end{proof}

\begin{lemma} \label{C:cylinder2}
Suppose that $G$ is a plane triangulation of maximum degree~$6$ and
$C$ is an induced cycle with either (i) no turns or (ii) exactly one right turn
and exactly one left turn, and all its vertices have degree~$6$.
Let $j,j'$ be maximum such that
the vertices in the interior of $C$
with distance less than $j$ to $C$,
and the vertices exterior to $C$
with distance less than $j'$ to $C$,
all have degree~$6$.
Then $j,j'\ge1$, and $G$ contains the following
{\em triangulated cylinder}:

Let $w=|C|$.  Start with
the Cartesian product of a $w$-cycle and a path of length $\ell$,
with $j+j'-2\le \ell\le j+j'$.
The vertices can be labeled $z_{a,b}$ with $a$ in the cyclic
group $Z_w$ and $0 \le b\le \ell$.
For some fixed $0\le k<w$, and for each $0\le b< \ell$,
add an edge from $z_{a,b}$ to $z_{a+1,b+1}$ if $0\le a< k$, and
add an edge from $z_{a,b}$ to $z_{a-1,b+1}$ if $k< a\le w$.
All triangles (except those of the form $z_{0,b_0}, z_{1,b_1}, z_{2,b_2}$, when
$w=3$) are 3-faces of $G$.

The triangulated cylinder has $w(\ell+1)$ vertices.
Moreover, $n- w(\ell+1)\le w(w-1)$.
\end{lemma}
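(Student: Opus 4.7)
The plan is to iterate Lemma~\ref{C:cylinder1} inward and outward from $C$ to construct the cycles of the cylinder, then bound the leftover ``cap'' vertices via Lemmas~\ref{L:inf} and~\ref{L:thank goodness}. Let $V_i$ denote the vertices at distance exactly $i$ from $C$ on the interior side (so $V_0 = V(C)$), and let $V_{-i}$ be the analogous sets on the exterior. Set $C_0 = C$. Since $C_0$ is a $w$-cycle with the prescribed turn pattern and every vertex of degree $6$, Lemma~\ref{C:cylinder1} gives that $G[V_1]$ is a connected outerplane graph bounded by a walk of length $w$ with the same cyclic turn pattern. When $j \ge 2$, every vertex of $V_1$ has degree $6$, so by the final clause of Lemma~\ref{C:cylinder1} this walk is a cycle $C_1$, satisfying the same hypotheses as $C_0$. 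Iterating yields cycles $C_0, C_1, \ldots, C_{j-1}$, each a $w$-cycle with matching turn pattern and all vertices of degree $6$; symmetrically we obtain $C_{-1}, \ldots, C_{-(j'-1)}$. The bound $j, j' \ge 1$ holds since the defining condition at $j = j' = 1$ is vacuous.

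Between consecutive cycles $C_i$ and $C_{i+1}$, the edges of $G$ triangulate the annular strip between them. Each non-turn vertex of $C_i$ contributes exactly two edges to $V_{i+1}$ (since $\rdeg_i = 2$), while the at-most-one right-turn and at-most-one left-turn vertices contribute fewer and more interior edges respectively, producing a single shift in the cyclic correspondence between $C_i$ and $C_{i+1}$. Labeling the vertices of $C_i$ consistently as $z_{0, i}, z_{1, i}, \ldots, z_{w-1, i}$, let $k$ be the cyclic index of the right-turn vertex when one exists, and set $k = 0$ otherwise. Then the diagonals $z_{a, b}\, z_{a+1, b+1}$ for $0 \le a < k$ and $z_{a, b}\, z_{a-1, b+1}$ for $k < a < w$ account for the interior diagonals of each triangulated strip, matching the cylinder described in the statement. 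Applying Lemma~\ref{C:cylinder1} once more at each end gives an outerplane $G[V_j]$ (resp.\ $G[V_{-j'}]$) bounded by a walk of length $w$ with the matching turn pattern. Since $V_j$ contains a vertex of degree less than $6$ by definition of $j$, this walk need not be a cycle; if it is a cycle of length $w$ with the matching turn pattern, we include $V_j$ in the cylinder as an extra cycle $C_j$, and analogously at the exterior end. The resulting $\ell$ lies in $\{j + j' - 2,\, j + j' - 1,\, j + j'\}$, and the cylinder contains $w(\ell + 1)$ vertices as claimed.

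The main obstacle is the bound $n - w(\ell + 1) \le w(w - 1)$ on the non-cylinder vertices, which form two ``caps,'' one interior to each of the innermost cylinder cycles $C^+$ and $C^-$; we show each cap has at most $w(w - 1)/2$ vertices. Consider the cap interior to $C^+$. In the extended case $C^+ = C_j$, the cap is $V_{j+1} \cup V_{j+2} \cup \cdots$; since the cylinder was not extended past $C_j$, the set $V_j$ contains a vertex of degree less than $6$, giving $d := \sum_{v \in V_j}(6 - \deg(v)) \ge 1$. By Lemma~\ref{C:rdeg} the boundary walk of $G[V_{j+1}]$ has length $w - d \le w - 1$, so $|V_{j+1}| \le w - 1$. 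Lemma~\ref{L:inf} yields $\rdeg^*(G[V_{j+1}]) = d \ge 1$, and applying Lemma~\ref{L:thank goodness} to $G[V_{j+1}]$ gives $|V_{j+1+k}| \le |V_{j+1}| - k$ for every $k \ge 0$; summing bounds the cap by $|V_{j+1}|(|V_{j+1}| + 1)/2 \le w(w - 1)/2$. In the non-extended case ($V_j$ not a cycle), Lemma~\ref{C:cylinder0} forces $G[V_j]$ to be either a path of length $w/2$, whose interior in $G$ is empty so the cap is just $V_j$ of size at most $w/2 + 1$, or a lollipop with cycle part of length strictly less than $w$ and marginal degree at least $1$, to which the same shrinkage argument applies. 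Either way the cap has at most $w(w - 1)/2$ vertices, so summing over both caps yields $n - w(\ell + 1) \le w(w - 1)$.
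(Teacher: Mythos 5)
Your proof follows essentially the same route as the paper's: iterate Lemma~\ref{C:cylinder1} inward and outward from $C$ to build the stack of $w$-cycles, classify the terminal walk via Lemma~\ref{C:cylinder0}, and bound each leftover cap by $w(w-1)/2$ using Lemma~\ref{C:rdeg}/Lemma~\ref{L:inf} together with Lemma~\ref{L:thank goodness}. The only differences are cosmetic (the marginal-degree identity for $G[V_{j+1}]$ is most directly Lemma~\ref{C:rdeg}, and the lollipop-case arithmetic $p+b(b+1)/2\le w(w-1)/2$ is left implicit), so the argument is correct and matches the paper's.
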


\begin{proof}
Starting with $i=0$ and $C_0=C$, apply Lemma~\ref{C:cylinder1} to $C_i$
to produce a closed walk $W_{i+1}$ on the neighbors of $C_i$ that lie in the interior of $C_i$.  If
every vertex of $W_{i+1}$ has degree~$6$, then it is a cycle;
rename it $C_{i+1}$ and repeat for $i:=i+1$.
This yields a sequence of $j-1$ cycles $C_0,\ldots,C_{j-1}$
and one walk $W_j$, where $W_j$ contains a vertex of degree less than~$6$.
Each one has length $w=|C|$ and the same turn pattern as $C$.
Note that these cycles (including $W_j$ if it is a cycle) and the edges between
them form a triangulated cylinder.

By Lemma~\ref{C:cylinder0}, $W_j$ is a cycle, $W_j$ bounds a path of
length $w/2$, or $W_j$ bounds
the union of a path of length $p$ and a cycle $B$
such that $|B|=w-2p$, $0< p < w/2$, and $\rdeg^*(B) \ge 1$.

Suppose that $W_j$ is a cycle.  Let $H$ be the outerplane graph induced by
the neighbors of $W_j$ on its interior.  By Lemma~\ref{C:rdeg}, if $H$
is nonempty then $H$ is connected,
$\rdeg^*(H)\ge \rdeg^*(W_j)+1=1$, and $|V(H)|\le w-1$.
Then by Lemma~\ref{L:thank goodness}, there are at most
$\sum_{i=0}^{w-1}(w-1-i) = w(w-1)/2$
vertices in $H$ or its interior; i.e., on the interior of $W_j$. Obviously if $H$ is empty this bound also holds on
the number of vertices on the interior of $W_j$.

If $W_j$ is a path, then it has $w/2+1$ vertices; note that
$w/2+1\le w(w-1)/2$.

Suppose that $W_j$ bounds the union of a path of length $p$ and a
cycle $B$ of length $b$
such that $b=w-2p$, $0< p < w/2$, and $\rdeg^*(B) = 1$.
By Lemma~\ref{L:thank goodness}, there are at most
$\sum_{i=0}^{b}(b-i) = b(b+1)/2$
vertices in $B$ or its interior.  Thus, the number of vertices
in $W_j$ and in its interior is at most $p+b(b+1)/2=w/2+b^2/2$.
Since $p\ge 1$, $b\le w-2$ and $w/2+b^2/2\le w/2+(w-2)^2/2
=(w^2-3w+4)/2\le w(w-1)/2$.

$G$ can be re{\"e}mbedded in the plane so that the interior and exterior
of $C$ are switched.  Then the previous discussion applies with $j$
replaced by $j'$.

The triangulated cylinder described in the statement of the lemma
is simply a description of what is produced, with $\ell=j+j'$ if
$W_j$ and $W_{j'}$ (on the interior and exterior of $C$, respectively)
are both cycles, $\ell=j+j'-1$ if one is a cycle, and $\ell=j+j'-2$ if
neither is a cycle.  There are at most $w(w-1)/2$ vertices on
the interior and exterior of the cylinder, 
so there are at most $w(w-1)$
vertices of $G$ that are not on the cylinder.
The number of vertices in the triangulated cylinder is $w(\ell+1)$.
\qed
\end{proof}

\section{Proof of Theorem~\ref{T:main}}\label{S:proof}

Let $G$ be an $n$-vertex plane triangulation of maximum degree 6,
with $n\ge n_0$, with $n_0$ a constant to be specified later.

Let $U$ be the set of vertices of degree less than 6.  Using
Euler's Formula one can check that $\sum_{u\in U}(\deg(u)-6)=-12$,
which implies that $|U|\le 12$, and (assuming that $n_0>3$)
also $|U|\ge 4$.

Let $T$ be a {\em Steiner tree} for $U$ in $G$; that is, let $T$ be a tree in $G$
such that $U\subseteq V(T)$ and $T$ is of minimum size.
If we let $L(T)$ be the set of leaves in $T$, then
$L(T)\subseteq U$.  One can prove by induction
that a tree with $k$ leaves has at most $2(k-1)$ vertices of degree
not equal to $2$.  Then if we let $U' = U\cup \{v\in V(T): \deg_T(v)\not=2\}$,
we have $|U'|\le 2(|L(T)|-1) + |U-L(T)| \leq 2|U|-2\le 22$.

Let $P$ be a longest path in $T$ such that no internal vertex is in $U'$.
(The endpoints of $P$ are in $U'$.)
The length of $P$, denoted $|P|$, is the number of edges in $P$.
Note that $n(T) = e(T)+1 \le 21\cdot|P|+1$.

Next, we define $G'$: make two copies of each edge of $T$ and, for each vertex $v\in V(T)$, make
$\deg_T(v)$ copies of $v$.  Draw these all near the original edges and vertices,
and create incidences in the natural way so that we obtain a plane
graph with one face $f_T$ that contains $T$ (before deleting $T$), and the other
faces are all $3$-faces (that correspond to the faces of $G$). See Figure~\ref{F:construct G'}.
Note the boundary of $f_T$ is a cycle; it cannot have repeated vertices since $T$ is acyclic.
For convenience, let us re{\"e}mbed
$G'$ in the plane such that $f_T$ is the outer face.
Note that if $f_T$ is deleted, we obtain a triangulated disc.
Let $V_T'$ be the vertices in $G'$ copied from $V(T)$; then $G'-V_T'=G-V(T)$.

\begin{figure}[ht]
\begin{center}
\includegraphics{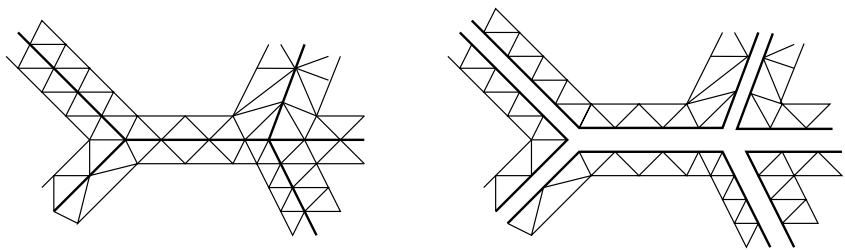}
\end{center}
\caption{An example of constructing $G'$ near a portion of $T$}
\label{F:construct G'}
\end{figure}

\begin{lemma}\label{C:map}
$G'$ can be mapped to $G_\infty$ such that vertices are sent to
vertices, edges to edges, and interior $3$-faces to $3$-faces,
such that adjacent $3$-faces in $G'$ are mapped to distinct $3$-faces in $G_\infty$.
\end{lemma}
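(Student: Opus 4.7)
The plan is to construct the map by ``unfolding'' $G'$ onto $G_\infty$ one $3$-face at a time. First I would record two preliminary facts: (a) $G'$ with its outer face $f_T$ removed is a triangulated disc, and hence simply connected; and (b) every vertex of $G'$ has degree at most $6$. For (b), interior vertices of $G'$ are exactly the vertices of $G-V(T)$, which have degree at most $6$ in $G$. A boundary vertex of $G'$ is one of the $\deg_T(v)$ copies of some $v\in V(T)$, and its degree is $s+2$, where $s$ counts the non-tree $G$-edges in the corresponding sector at $v$ and the $+2$ counts the two bounding tree-edge copies. If $v$ is a leaf of $T$ then $v\in L(T)\subseteq U$, so $\deg_G(v)\le 5$ and the single copy has degree $\deg_G(v)+1\le 6$. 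If $\deg_T(v)=k\ge 2$, then $s\le \deg_G(v)-k\le 4$, so the copy again has degree at most $6$.

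Next I would fix an arbitrary interior $3$-face $t_0$ of $G'$, pick a $3$-face of $G_\infty$, and send the vertices of $t_0$ to those of the chosen $3$-face by an orientation-respecting bijection. I extend the map inductively: whenever an as-yet-unmapped interior $3$-face $t$ of $G'$ shares an edge $e$ with an already-mapped $3$-face $t'$, send $t$ to the unique $3$-face of $G_\infty$ sharing the image of $e$ with the image of $t'$ but lying on the opposite side.

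The main obstacle is showing that this propagation is well-defined, i.e., independent of the order of extension. Since $G'\setminus\{f_T\}$ is simply connected, it suffices to verify local consistency around each vertex of $G'$: the monodromy around each vertex must be trivial. At an interior vertex $v$, the $d=\deg_{G'}(v)\le 6$ incident $3$-faces of $G'$ must be sent cyclically to $d$ consecutive faces around the image of $v$ in $G_\infty$; since $d\le 6$ matches the valence in $G_\infty$, these $d$ images are distinct and the cyclic ordering closes up consistently. At a boundary vertex only $d-1\le 5$ of the incident corners belong to interior $3$-faces (the remaining corner lies in $f_T$), so these fit among the $6$ faces around the image vertex with no wrap-around condition at all.

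Finally, the distinctness claim is immediate: if two interior $3$-faces of $G'$ share an edge $e$, then by construction their images share the image of $e$ and lie on its opposite sides, and the two $3$-faces adjacent to any edge in $G_\infty$ are distinct.
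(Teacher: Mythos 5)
Your overall strategy is the paper's: develop the map one facial triangle at a time via the unique edge-sharing extension, and reduce well-definedness to checking consistency around vertices, using that the triangulated disc is simply connected (the paper carries out this reduction explicitly, by induction on the number of vertices enclosed by closed walks in the dual). The genuine gap is in your local consistency check at interior vertices. If $v$ is an interior vertex of degree $d$, the $d$ faces at $v$ form a dual cycle, and each extension step moves the image to the next face around $g(v)$ in $G_\infty$; this cycle closes up consistently if and only if $d$ is a multiple of $6$, i.e.\ $d=6$. Your justification, that ``$d\le 6$ matches the valence in $G_\infty$'' so ``the cyclic ordering closes up consistently,'' is false for $d<6$: for example, five consecutive faces around a degree-$6$ vertex of $G_\infty$ do not close into a compatible cycle, so an interior vertex of degree $5$ would create nontrivial monodromy (the two dual paths around the two sides of that vertex would assign different images to the same triangle), and the construction would break. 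What is needed --- and what you never state or use --- is that every interior vertex of $G'$ has degree \emph{exactly} $6$. This holds precisely because $U\subseteq V(T)$: every vertex of degree less than $6$ lies on the Steiner tree and therefore appears in $G'$ only as boundary copies on $f_T$, while the interior vertices of $G'$ are the vertices of $G-V(T)$ and retain degree $6$. This is the crux of the lemma and the reason the construction cuts along a Steiner tree containing $U$; the paper invokes it at the corresponding step (``by the construction of $G'$, $\deg_{G'}(v)=6$''). Your preliminary fact (b), that all degrees of $G'$ are at most $6$, points in the wrong direction for interior vertices; it is only relevant at boundary vertices, where, as you correctly note, there is no wrap-around condition at all.

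A secondary remark: the reduction ``simply connected, hence it suffices to check monodromy around each vertex'' is itself the bulk of the paper's proof (done there combinatorially by rerouting a dual walk around an enclosed vertex, which again uses $\deg_{G'}(v)=6$). Invoking it as a standard developing-map fact is defensible, but be aware that it is not free; once you add the exact-degree-$6$ statement for interior vertices, either that appeal or the paper's explicit induction completes the argument, and your final distinctness observation is then fine.
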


\begin{proof}
A {\it facial triangle} of a plane graph is a triangle that bounds a $3$-face.
(Every triangle in $G_\infty$ is a facial triangle.)
Note that if two facial triangles of $G'$ share an
edge and one is already mapped to $G_\infty$, then $(*)$ there is exactly one way to
extend the map to the other triangle such that their union is mapped
isomorphically to the union of two triangles in $G_\infty$.

We begin by describing a map that sends
each facial triangle of $G'$ to a triangle in $G_\infty$,
isomorphically.  First, map one facial triangle $f_0$ of $G'$
to a triangle in $G_\infty$ arbitrarily. For each other facial triangle $f$ of $G'$,
consider a sequence of triangles $f_0,f_1,\ldots,f_i=f$ that forms a $f_0,f$-path
in the dual of $G'$ (i.e., consecutive triangles share an edge), and use
this and $(*)$ to determine how to map $f$ to $G_\infty$.  This
is well-defined if $f$ would be mapped identically to $G_\infty$ using any other
$f_0,f$-path in the dual of $G'$.  Two $f_0,f$-paths in the dual of $G'$
can be combined to get a sequence that begins and ends with $f$,
such that each pair of consecutive facial triangles satisfies $(*)$.  Thus, it
suffices to show that for any such sequence $W$ in $G'$ that begins and ends at the
same face $f$, the first and last copies of $f$ are mapped identically to $G_\infty$.

We will prove it by induction.
Note that a cycle in the plane dual of $G'$ is a Jordan curve,
so its interior may contain vertices of $G'$.
For any closed walk $W$ in the plane dual of $G'$,
let $h(W)$ be the total number of such vertices for all cycles in $W$.
We will do induction primarily on $h(W)$ and break ties according to the length of $W$.

Suppose that $f'$ is not first or last in $W$ and $f'$ is repeated in $W$.
Then we may let $W'$ be a $f',f'$-subsequence of $W$, and let $W''$ be the
$f,f$-sequence obtained by replacing $W'$ by $f'$ in $W$.
Since $h(W')+h(W'')\le h(W)$, we have $h(W')\le h(W)$ and $h(W'')\le h(W)$.
Also, the lengths of $W'$ and $W''$ are each strictly less than the length
of $W$.  Hence, we can apply induction to $W'$ and $W''$.
Thus, we may assume that $W$ has no repeated triangles
(other than its first and last triangle, $f$).

Write $W$ as $f=f_0',\ldots,f_\alpha'=f$.
If $\alpha=2$, then $f_0'=f_2'$, and applying $(*)$ twice at $f_1'$ shows that $f_0'$ and $f_2'$ are mapped
identically to $G_\infty$.
Otherwise the dual of $W$ is a cycle
in the plane dual of $G'$; as a Jordan curve it has a nonempty interior $R$.
Now, $f_0'$ and $f_1'$ share an edge; let $v$ be its endpoint that is in $R$.
Let $k$ be maximum such that $f_0',\ldots,f_k'$ are all incident to $v$; then $1\le k\le 5$.
By the construction of $G'$, $\deg_{G'}(v)=6$.
Thus we can replace the faces between $f_0'$ and $f_k'$ in $W$ by the other
$5-k$ triangles of $G'$ that are incident to $v$, and obtain a new closed walk $W'$.
We can apply induction to $W'$ because $h(W')= h(W)-1$.
This finishes the proof that for any such sequence $W$ in $G'$ that begins and ends at the
same face $f$, the first and last copies of $f$ are mapped identically to $G_\infty$.

So we have a well-defined map from $G'$ to $G_\infty$ that is isomorphic on each facial triangle,
and satisfies $(*)$.   For any two adjacent 3-faces $f,f'$ in $G'$, there is a walk in the dual of $G'$
from $f_0$ that ends with either $f,f'$ or $f',f$, and a shortest such walk will be a path.  Then
according to the definition of our map, $f$ and $f'$ must be mapped to distinct adjacent
triangles in $G_\infty$.  Since each edge $e\in E(G')$ is in one or two facial triangles, by $(*)$
the induced map on the edges is well-defined.
For each vertex $v\in V(G')$, the facial triangles that contain $v$ form
a path or a cycle in the dual of $G'$, and according to $(*)$ they will be mapped to triangles that
are consecutive around some vertex in $G_\infty$; it follows that there is a well-defined induced map on the vertices.
\qed
\end{proof}

Let $g$ be the above map.

\begin{lemma} \label{new 6}
If $g(x)=g(y)$ in $G_\infty$ where $x,y\in V(G')$ are distinct vertices, then their closed neighborhoods, $N[x]$ and $N[y]$, are
disjoint (or in other words, the distance from $x$ to $y$ in $G'$ is at
least~3).
\end{lemma}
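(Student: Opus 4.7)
The plan is to prove the contrapositive: if $x \ne y$ and $d_{G'}(x,y) \le 2$, then $g(x) \ne g(y)$. The case $xy \in E(G')$ is immediate from Lemma~\ref{C:map}, which sends $xy$ to an edge of $G_\infty$ and thus forbids $g(x) = g(y)$, since $G_\infty$ is simple and has no loops. The case $d_{G'}(x,y) = 2$ reduces, via a common neighbor $z$ of $x$ and $y$ in $G'$, to the sublemma that for every $z \in V(G')$ the restriction $g|_{N_{G'}(z)}$ is injective into $N_{G_\infty}(g(z))$; applying this at $z$ immediately gives $g(x) \ne g(y)$.

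To prove the sublemma, I would first check that $\deg_{G'}(z) \le 6$: for $z \notin V_T'$ this is inherited from $G$, and for a copy $z$ of a tree-vertex $u$ a short case analysis (separating leaf-copies, which use $u \in U$ and hence $\deg_G(u) \le 5$, from non-leaf-copies, where two tree edges bound the angular sector) gives the bound. The facial triangles at $z$ in $G'$ then form either a cycle of length $\deg_{G'}(z) = 6$ in the dual of $G'$ (when $z$ is interior, in which case $z$ is forced to be a non-tree vertex of $G$ and hence of degree $6$) or a path of length $\deg_{G'}(z) - 1 \le 5$ (when $z$ lies on $f_T$). By Lemma~\ref{C:map} these triangles map to consecutive facial triangles around $g(z)$ in $G_\infty$, with adjacent ones landing in distinct images; since $g(z)$ is incident to exactly $6$ triangles in $G_\infty$ and our images form at most $6$ consecutive ones, they are pairwise distinct. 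Each edge at $z$ is determined by the two faces (both triangles, or one triangle and the outer-face angle) it separates, so distinctness of the triangle images forces distinctness of the edge images, and hence of the neighbors of $z$.

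The only real bookkeeping concern is that the consecutive triangles around $g(z)$ in $G_\infty$ must not wrap all the way around and start repeating; this is exactly where the degree bound $\deg_{G'}(z) \le 6$ is used. Apart from that, the sublemma is a direct consequence of Lemma~\ref{C:map}, so I do not anticipate any serious obstacle in writing this up carefully.
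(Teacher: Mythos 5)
Your proposal is correct and follows essentially the same route as the paper: the adjacency case via edge-preservation in Lemma~\ref{C:map}, and the distance-two case by observing that the facial triangles at the common neighbor $z$ form a path or cycle in the dual whose images are consecutive around $g(z)$, so that $\deg_{G'}(z)\le 6$ forces the neighbors of $z$ to map injectively. Your write-up merely spells out details (the degree bound for copies of tree vertices, the no-wrap-around check) that the paper leaves implicit.
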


\begin{proof}
If $x,y$ are adjacent in $G'$, then they are mapped to adjacent
(hence distinct) vertices by Lemma~\ref{C:map}.  Next suppose that $z\in N(x)\cap
N(y)$.  The $3$-faces of $G'$ that are incident to $z$ will either form
a path or a cycle in the dual of $G'$.  According to the map $g$, the
images of the faces under $g$ will again be consecutive around $g(z)$.
Since $\deg(z)\le 6$, this will map all neighbors of $z$ to distinct
neighbors of $g(z)$ in $G_\infty$.
\qed
\end{proof}

There is a pattern of vertices from $G_{\infty}$
that uses every seventh vertex (see the right side of Figure~\ref{F:infinite-grid}).
Let $S\subseteq V(G_{\infty})$ be the (infinite) set of vertices indicated in the figure.
For each vertex $v\in V(G')$, $g(v)\in S$ or $g(v)$ is adjacent to a vertex in $S$.  If $v\not\in V_T'$ then
the seven vertices of $N[v]$ map to the seven vertices of $N[g(v)]$; as this
includes one vertex of $S$, $v$ is dominated by $g^{-1}(S)=\{v\in V(G')\st g(v)\in S\}$.  Therefore $G'$ is dominated by the union
of $g^{-1}(S)$ and $V_T'$, or equivalently
the union of $g^{-1}(S)-V_T'$ and $V_T'$.
Let $D$ be the union of $g^{-1}(S)-V_T'$ and $V(T)$.  By the construction
of $G'$ from $G$, $D$ is a subset of $V(G)$ that dominates every vertex of $G$.

Next we consider the size of $D$.  For distinct $x,y\in g^{-1}(S)$,
the closed neighborhoods $N[x],N[y]$ in $G'$ are disjoint:
 if $g(x)=g(y)$, this holds by Lemma~\ref{new 6}, and if $g(x)\not=g(y)$, then
$N[g(x)]\cap N[g(y)]=\emptyset$ by definition of $S$,
and $N[x]\cap N[y]=\emptyset$ by Lemma~\ref{C:map} and the definition of $g$.
Then since $|N[x]|=7$ for each $x\in g^{-1}(S)-V_T'$,
the size of the set $g^{-1}(S)-V_T'$
is at most $|V(G')|/7$.  Since $|V_T'| =\sum_{v\in V(T)}\deg_T(v) = 2e(T) = 2n(T)-2$,
we obtain $|D|\le (n-n(T)+2n(T)-2)/7 + n(T) = n/7 + 8n(T)/7 - 2/7$.

Since $D$ dominates $G$, it suffices to show that $|D|\le n/4$ for $n>n_0$.
If $n/7 + 8n(T)/7 - 2/7\le n/4$, or $n(T)\le 3n/32 +1/4$, then this is satisfied.  So, we henceforth assume that $n(T)>3n/32 +1/4$,
which implies that $21\cdot|P|+1 >3n/32 +1/4$ and $n< 224\cdot|P|+8$.
Note that by choosing $n_0$ large enough, we can ensure that $|P|\ge \ell_0$, where $\ell_0$ is a constant to be determined later.

Recall that $P$ is a longest path in $T$ such that no internal vertices are in $U'$.
Let $x$ be a middle vertex of $P$, that is, let $x$ be a vertex on $P$ of distance $\lfloor{|P|/2}\rfloor$ from an
endpoint of $P$.
Let $N_i(x)$ be the set of vertices of $G$ with distance exactly $i$ from $x$,
let $N_i[x]$ be the set of vertices of $G$ with distance at most $i$ from $x$, and
let $G_i$ be the graph induced by $N_i[x]$.

Suppose that $u\in U\cap N_i(x)$ for some $i$.
Let $Q$ be an $x,u$-path of
length $i$ (in $G_i$\,).  Let $v_1,v_2$ be the endpoints of
the path $P$.
Without loss of generality, assume
that the $x,u$-path in $T$ contains $v_1$ (and not $v_2$).
By the choice of $P$, deleting the interior of its $x,v_1$-subpath from
$T$ gives a 2-component graph that contains $U$.  We could then add $Q$
to that graph to obtain a connected graph that contains $U$, and let
$T'$ be a spanning tree of it.  Now,
$n(T')\le n(T)-(\lfloor{|P|/2}\rfloor-1)+(i-1)$ and
$T$ is a Steiner tree, so $i\ge \lfloor{|P|/2}\rfloor$.
Thus, for all $i<\lfloor{|P|/2}\rfloor$,
every vertex in $N_i[x]$ has degree 6 in $G$.

Let $r$ be minimum such that $G_r$ is not a triangulated hexagon.
Then $G_{r-1}$ accounts for $1+\sum_{i=1}^{r-1} 6i$ distinct vertices, so $n> 1+6r(r-1)/2>3(r-1)^2$.
Then $r<1+\sqrt{n/3}$, so $3r+1<4+\sqrt{3n}<4+\sqrt{3(224\cdot|P|+8)}$,
which is at most $(|P|-1)/2$ if and only if $|P|^2-2706|P| - 15 \ge 0$.
Our choice of $n_0$ will imply that $|P|\ge 2707$,
so $3r+1< (|P|-1)/2 \le \lfloor{|P|/2}\rfloor$.  Therefore
every vertex in $N_{3r+1}[x]$ has degree 6 in $G$.

\begin{figure}[ht]
\begin{center}
\psfrag{x}{$x$}
\psfrag{w0}{$W_0$}
\psfrag{w1}{$W_1$}
\psfrag{w2}{$W_2$}
\psfrag{w3}{$W_3$}
\psfrag{w4}{$W_4$}
\psfrag{w5}{$W_5$}
\includegraphics{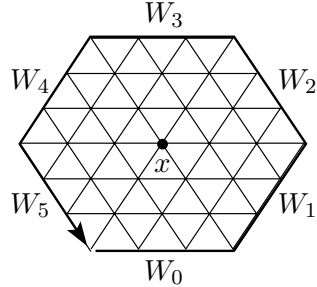}
\end{center}
\caption{$G_2$, incident edges, and $W$, when $r=3$. ($W$ is not necessarily a cycle.)}
\label{F:hex-grid}
\end{figure}

The union $N_{r-1}(x)\cup N_r(x)$ induces a cyclic sequence of triangles, such that
each pair of consecutive vertices along the boundary of $G_{r-1}$ has a common
neighbor in $N_r(x)$, and each of the six ``corner'' vertices of $G_{r-1}$ is
also adjacent to an additional (a third) vertex in $N_r(x)$ (called
{\em an $r$-corner}).
Also, this accounts for all vertices of $N_r(x)$,
and it induces a cyclic ordering of vertices in $N_r(x)$ that forms a
closed walk $W$ of length $6r$, which is formed from the conjunction of
walks $W_0,\ldots,W_5$ that begin and end at $r$-corners, such that
$G_{r-1}$ is to the left of each walk, and each
internal vertex of each walk has marginal degree 0.
(See Figure~\ref{F:hex-grid}.)
By the choice of $r$, $G_r$ is not a triangulated hexagon, so $N_r(x)$ does not induce a
cycle.  Therefore, either $(i)$ $W$ forms a cycle but not an induced cycle,
which is the case if nonconsecutive vertices along $W$ are adjacent, or
$(ii)$ there is at least one repeated vertex on $W$.

Because $G_r$ is part of a triangulation and all its vertices have
degree 6, the cases $(i)$ or $(ii)$ typically
will not occur in isolation.
For example, suppose that
$W_0=x_0,\ldots,x_r$ and $W_q=x_0',\ldots,x_r'$ share a vertex
$x_i=x_j'$ (this is case $(ii)$) with $0<i,j<r$.  Then we also must have
$x_{i-1}=x_{j+1}'$ and $x_{i+1}=x_{j-1}'$.  If, instead we have that $x_ix_j'$ is an
edge (case $(i)$) with $0<i,j<r$, then we also must have four more edges:
$\{x_{i-1}x_j', x_{i-1}x_{j+1}', x_ix_{j-1}', x_{i+1}x_{j-1}'\}$ or
$\{x_ix_{j+1}', x_{i-1}x_{j+1}', x_{i+1}x_j', x_{i+1}x_{j-1}'\}$.
Similar analysis (including special cases when $i$ or $j$ is $0$ or $r$)
and ignoring symmetric cases gives that
without loss of generality there are numbers $q,k$ with $0\le q\le 3$
and $0\le k\le r$ such that for $W_0$ and $W_q$, either $(i)$ $x_ax_b'$ is an edge for each
$0\le a,b\le k$ with $a+b\in\{k,k-1\}$,
or $(ii)$ $x_a=x_b'$ for each
$0\le a,b\le k$ with $a+b=k$.
See Figure~\ref{F:overlap}.

\begin{figure}[ht]
\begin{center}
\psfrag{W0}{$W_0$}
\psfrag{Wq}{$W_q$}
\psfrag{x0}{$x_0$}
\psfrag{xk}{$x_k$}
\psfrag{x01}{$x_0'$}
\psfrag{xk1}{$x_k'$}
\includegraphics{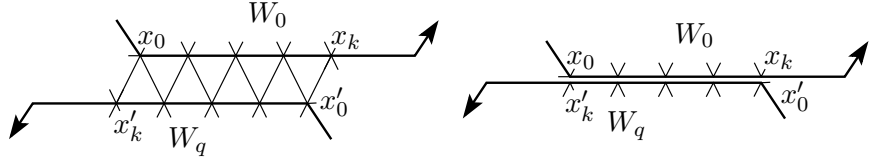}
\end{center}
\caption{$N_r(x)$ does not induce a cycle, cases $(i)$ and $(ii)$}
\label{F:overlap}
\end{figure}

Suppose that $q=0$. In case $(i)$ the edge $x_{\lfloor{k/2}\rfloor}x_{\lfloor{k/2}\rfloor}'$
will be a loop.
In case $(ii)$,
$x_{\lfloor{k/2}\rfloor}x_{\lceil{k/2}\rceil}'$ is a loop if $k$ is odd.
If $k$ is even and $k>0$ then we get $x_{k/2-1}=x_{k/2+1}'$ which
forces $x_{k/2}$ to have degree 3 in $G$.  If $k=0$ then $x_0$ and $x_0'$
are the same vertex with respect to $W$, so $x_0'$ is not actually a
repeated vertex on $W$ that satisfies $(ii)$.
Thus we may assume that $q>0$.

Suppose that $q=1$.  In case $(i)$ with $k=r$ and in case $(ii)$ with
$k=r-1$, $x_kx_0'$ is a loop.
Next consider case $(ii)$ with $k=r$.
Going clockwise around $x_r$ from $x_rx_{r-1}$, there is exactly one edge before $x_0'x_1'$ is reached.
However, $x_{r-1}=x_1'$ and $x_r=x_0'$, so
$x_rx_{r-1}=x_0'x_1'$ and $\deg_G(x_r)=2$, a contradiction.
Now consider the walk $W'=x_k,\ldots,x_r=x_0'$.
In case $(i)$ with $k<r$, we
add the edge $x_0'x_k$ from $G$ to make it a closed walk, then take it
in reverse order;
the walk obtained has turns at $x_k$ and $x_0'$, which are left turns.
In case $(ii)$ with $k\le r-2$,
$W'$ is a closed walk (since $x_0'=x_k$) with only one turn, a
sharp right at $x_k$; we reverse the order,
so that we obtain a walk with one (sharp) left turn.

If $q=2$ we take the straight diagonal walk from $x_0$ into $G_r$
and then make a right turn at the row containing $x_k'$, and go along
a straight horizontal walk to $x_k'$.  In case $(i)$ we add $x_k'x_0$, an edge of $G$,
to make it a closed walk, then reverse the order; in case $(ii)$ it
is a closed walk and we reverse the order.  We get a walk with exactly
one turn, which is a left turn.

Whether $q=1$ or $q=2$, the chosen walk is a cycle; call it $C_0$.
We obtain a contradiction
as follows:  $C_0$ has positive marginal degree.  As in Section~\ref{S:defs},
let $V_i$ be the set of vertices in the interior of $C_0$ at distance $i$
from $C_0$.  By Lemma~\ref{L:thank goodness}, we see that $V_i=\emptyset$
when $i\ge |C_0|$.  Since the interior of $C_0$ is triangulated, it or $C_0$ must contain at least one
vertex of $U$; otherwise, by Lemma~\ref{L:inf} the marginal degree of $C_0$ would equal 6,
necessitating more than two left turns.
 Hence there is a vertex of $U$ with distance less than $|C_0|$ from $C_0$.
Now if $q=1$, then $|C_0|\le r-k+1$ and the distance from $x$ to $C_0$ is $r$. If $q=2$, then $|C_0|\le 2r+1$ and $x$ is on $C_0$.
In either case, there is a vertex of $U$ at distance at most $3r+1$ from $x$, which contradicts the fact that every
vertex in $N_{3r+1}[x]$ has degree 6.

Thus we may assume that $q=3$.  Now it's easy to find a $x_0,x_k'$-path
through $x$ in $G_r$ with at most one turn (a left turn), and after
adding the edge $x_k'x_0$ if it's case $(i)$, we obtain a closed walk
with one right and one left turn (neither sharp) unless $k=0$ in
which case there are no turns.  This is a cycle of length $2r$ or $2r+1$, call it $C_0$.

Now we apply Lemma~\ref{C:cylinder2} with $C=C_0$.
Since every vertex of $C_0$ is distance at most $r$ from $x$,
no vertex of $U$, that is, no vertex of degree less than 6, is distance less than $\lfloor{|P|/2}\rfloor-r$
to $C_0$.
Therefore $j$ and $j'$ (as defined in Lemma~\ref{C:cylinder2})
are each at least $\lfloor{|P|/2}\rfloor-r$.
Thus we have a cylinder with $w=|C_0|$ and the other dimension
of length $\ell \ge 2 (\lfloor{|P|/2}\rfloor -r)-2 \ge |P|-2r-3$.
There are exactly $w(\ell+1)$ vertices in the cylinder and
$n\le w(\ell+1)+w(w-1) = w(\ell+w)$.

The vertices of the cylinder can be labeled $z_{a,b}$ with $a$ in the
cyclic group $Z_w$, and $0\le b\le \ell$, with edges as
described in Lemma~\ref{C:cylinder2}. Say {\it row $i$} to refer to the vertices
$z_{a,b}$ with $a=i$.

Let $H$ be a graph with vertices labeled $y_{a,b}$ with
$0\le a\le w+1$ and $0\le b\le \ell$, with $y_{a,b}$
adjacent to $y_{c,d}$ if (i) $|a-c|\le 1$ and (ii)
$z_{a\,{\rm mod\,}w,b}$ is adjacent to $z_{c\,{\rm mod\,}w,d}$.
Then $H$ can be embedded in $G_\infty$, and
$y_{a,b}\mapsto z_{a\,{\rm mod\,}w,b}$ maps $H$ to the cylinder
such that edges and faces are preserved.
Intuitively, this map wraps the ``sheet'' $H$ around the cylinder
such that rows $0$ and $1$ are glued to rows $w$ and $w+1$.

Let $S_H$ be the set of vertices in $H$ mapped to the usual dominating
set of $G_\infty$.  By the pattern of that set on $G_\infty$,
$S_H$ has at most $\lceil{\ell/7}\rceil$ vertices per row, so it
has size at most $\lceil{\ell/7}\rceil(w+2)$.  Note that for all
$a,b$ with $1\le a\le w$ and $1\le b\le \ell-1$, the vertex $y_{a,b}$
has six neighbors in $H$, just as in $G_\infty$, so it is dominated
by $S_H$.  Now let $S_Z$ be the set of vertices $z_{a,b}$
in the cylinder such that
there is some vertex $y_{c,b}\in S_H$ with $c\equiv a ({\rm mod\ }w)$.
Then $S_Z$ dominates all $z_{a,b}$ with
$1\le b\le \ell-1$, and $|S_Z|\le |S_H|$.
Then $S_Z\cup(V(G) - \{z_{a,b} : 1\le b\le \ell-1\})$ is a dominating set of $G$.
It has size at most $\lceil{\ell/7}\rceil(w+2) + n-w(\ell-1)$, so we will be satisfied if
this is at most $n/4$.

Equivalently, we would like to have
$3n/4\le w(\ell-1) - \lceil{\ell/7}\rceil(w+2)$.  Since
$n\le w(\ell+w)$ and $\lceil{\ell/7}\rceil\le (\ell+6)/7$, it follows
that $3w(\ell+w)/4 \le w(\ell-1) - (\ell+6)(w+2)/7$ would suffice.
This can be rewritten as $21w^2+52w+48\le (3w-8)\ell$,
then, since $3w-8$ is positive, as
$7w+36+336/(3w-8)\le \ell$.  Since $3w-8\ge 1$,
$7w+372\le \ell$ would suffice.
Now $w\le 2r+1$, so $14r+379\le \ell$ would suffice.
Then since $\ell\ge |P|-2r-3$,
$16r+382\le |P|$ would suffice.
Since $r<1+\sqrt{n/3}$ and $n<224\cdot|P|+8$,
it would suffice to have
$224(16\sqrt{n/3}+398)+8\le n$ be true.
This is equivalent to
$3584/\sqrt{3}+89160/\sqrt{n}\le \sqrt{n}$, so
we may complete the proof of Theorem~\ref{T:main}
by letting $n_0=4500000$.
\qed

\section{Extensions of Theorem~\ref{T:main} }\label{S:extend}

We conjecture that
using largely the same proof we gave for Theorem~\ref{T:main},
we could instead prove a sharp result, or prove the
same result for a more general class of graphs.

\begin{conjecture}\label{c:n/6}
There exists a constant $c$ such that any $n$-vertex triangulation with maximum degree $6$
has a dominating set of size at most $n/6 +c$.
\end{conjecture}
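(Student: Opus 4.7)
The plan is to adopt the framework of the proof of Theorem~\ref{T:main} essentially verbatim, merely rebalancing the split between the small-Steiner-tree regime and the triangulated-cylinder regime so as to target $n/6+c$ in place of $n/4$. For $n$ smaller than any prescribed constant the statement holds trivially by taking $c$ large, so I focus on $n\ge n_0$ for some absolute $n_0$ chosen at the end.

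In the small-Steiner-tree regime, the direct bound $|D|\le n/7+8n(T)/7-2/7$ derived in Section~\ref{S:proof} already yields $|D|\le n/6+c$ as soon as $n(T)\le n/48+O(c)$. So I would adopt $n(T)\le n/48$ as the small-tree threshold; in this regime the argument is unchanged.

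For $n(T)>n/48$, we have $|P|\ge(n(T)-1)/21>n/1008-O(1)$, so Lemma~\ref{C:cylinder2} supplies a triangulated cylinder of length $\ell\ge|P|-2r-3\ge n/1008-2\sqrt{n/3}-O(1)\ge n/2000$ once $n_0$ is taken large enough. Since the cylinder accounts for $w(\ell+1)\le n$ vertices, its width obeys $w\le n/\ell\le 2000$, an absolute constant. Consequently the vertices of $G$ outside the cylinder total at most $w(w-1)=O(1)$, and we may simply include all of them in $D$. Dominating the cylinder itself by the $G_\infty$-pattern contributes $w\ell/7+O(w)$ vertices, so $|D|\le n/7+O(1)\le n/6+c$.

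The main technical point is producing a cylinder dominating set of the right size when the width is small. The $\lceil\ell/7\rceil(w+2)$ bound used in Theorem~\ref{T:main} beats $n/6$ only for $w\ge 12$; for the remaining widths $w\in\{3,\dots,11\}$ (and each shear $k\in\{0,\dots,w-1\}$) one has to exhibit a density-$1/6$-or-better dominating set of the triangulated cylinder explicitly—a finite, essentially arithmetic check that should pose no real structural difficulty. This bounded-case verification is the only piece of new work required beyond what is already present in the proof of Theorem~\ref{T:main}; everything else is a direct re-tuning of constants.
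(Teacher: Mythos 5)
Note first that the statement you are proving is stated in the paper only as Conjecture~\ref{c:n/6}: the paper gives no proof, explicitly identifies the ``major addition required'' as a case analysis for the triangulated cylinders of Lemma~\ref{C:cylinder2} with small width, and attributes a verification to a separate manuscript of Liu and Pelsmajer. Your proposal follows exactly the route the paper sketches in Section~\ref{S:extend} (re-tune the Steiner-tree threshold, observe that the cylinder width is then bounded, and handle small widths by better patterns), and its genuine gap is precisely the part the paper says is missing: you never construct the density-at-most-$1/6$ dominating sets for the cylinders of width $w\in\{3,\dots,11\}$, you only assert they exist as ``a finite, essentially arithmetic check.'' This is not a negligible afterthought. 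The wrap-around of the sheet $H$ around the cylinder makes the $G_\infty$-pattern cost $\lceil \ell/7\rceil(w+2)$, i.e.\ density $(w+2)/(7w)$ relative to $n\approx w\ell$; your intermediate claim that the cylinder is dominated by $w\ell/7+O(w)$ vertices, hence $|D|\le n/7+O(1)$, is therefore false as stated (the penalty is $2\ell/7$, which is linear in $n$, not $O(w)$), and for $w=3$ the method gives about $5n/21$, well above $n/6$. Moreover the cylinders of Lemma~\ref{C:cylinder2} are not uniform quotients of the triangular lattice: they carry a seam (the parameter $k$, where the diagonals switch direction) running the full length $\ell$, so the required patterns must be designed per width and per seam position and must cover the seam columns along their entire length. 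Finally, the paper's own lower-bound family $G(n)$ is exactly a width-$3$ cylinder showing that no dominating set has fewer than $n/6$ vertices, so at $w=3$ the construction you need has zero slack; ``should pose no real structural difficulty'' is exactly the claim that would have to be verified, and it is the reason the statement remained a conjecture in this paper.

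The surrounding bookkeeping in your proposal is fine and matches the paper's framework: with threshold $n(T)\le n/48$ the bound $|D|\le n/7+8n(T)/7-2/7$ gives $n/6+c$; otherwise $|P|$ is linear in $n$, Lemma~\ref{C:cylinder2} yields $\ell\ge |P|-2r-3=\Omega(n)$, hence $w\le n/\ell$ is an absolute constant, the at most $w(w-1)$ vertices off the cylinder can be absorbed into $c$, and for $w\ge 12$ the bound $\lceil\ell/7\rceil(w+2)+n-w(\ell-1)$ already gives density at most $1/6$ plus a constant. But without the explicit small-width cylinder patterns (for every $w\le 11$, every seam $k$, and arbitrary $\ell$), the argument does not establish Conjecture~\ref{c:n/6}.
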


\begin{conjecture}\label{c:t}
For any constant $t$, there exists $n_t$ such that an $n$-vertex triangulation
with at most $t$ vertices of degree other than $6$ and $n\geq n_t$ has a dominating set of
size at most $n/4$.
\end{conjecture}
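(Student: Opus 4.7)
The plan is to adapt the proof of Theorem~\ref{T:main} by taking $U$ to be the set of vertices of degree different from $6$, so $|U|\le t$. Since $\sum_{v\in V(G)}(6-\deg(v))=12$ for any plane triangulation, the bound $|U|\le t$ forces the maximum degree of $G$ to be at most a constant $\Delta=\Delta(t)$ (roughly $3t$).

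The first key step is to construct a tree $T\supseteq U$ for which the plane graph $G'$ obtained by cutting along $T$ (as in the construction preceding Lemma~\ref{C:map}) has maximum degree $6$. Each copy of a vertex $v\in V(T)$ in $G'$ is incident to $2$ duplicated tree edges together with the non-tree edges of $v$ lying in one of the $\deg_T(v)$ angular regions around $v$, so every angular region at $v$ must contain at most $4$ non-tree edges. This requires $\deg_T(v)\ge\lceil\deg_G(v)/5\rceil$ tree edges at $v$, chosen in a roughly evenly spaced cyclic pattern. Starting from a minimum Steiner tree $T_0$ for $U$, we enlarge it by attaching, for each vertex $v$ of degree greater than $6$, a balanced family of short paths from selected neighbors of $v$ back into $T_0$ (breaking any resulting cycles). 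Each such augmentation adds at most $O(\Delta)$ vertices, so in total $n(T)\le n(T_0)+O(t^2)$. During the same surgery one also ensures that every leaf of $T$ still lies in $U$ (hence has degree at most $5$ in $G$), so that leaf-copies in $G'$ have degree at most $6$ as well.

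The rest of the proof is now essentially unchanged. Map $G'$ to $G_\infty$ via Lemma~\ref{C:map}, pull back the period-$7$ dominating pattern on $G_\infty$, and form $D:=(g^{-1}(S)\setminus V_T')\cup V(T)$; as in Section~\ref{S:proof}, $|D|\le n/7+8n(T)/7-2/7$. If $n(T)\le 3n/32+O(1)$ then $|D|\le n/4$ and we are done, so assume $n(T)$ is large, which forces $n(T_0)$ to be large as well. Choose a longest path $P$ in $T_0$ whose internal vertices avoid $U':=U\cup\{v\st\deg_{T_0}(v)\ne 2\}$, noting $|U'|\le 2t$, and let $x$ be its middle vertex. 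Then every vertex of $N_{\lfloor|P|/2\rfloor-1}[x]$ has degree $6$. Growing hexagonal balls around $x$ as in Section~\ref{S:proof} produces an induced cycle $C_0$ of length at most $2r+1$ with at most one right and one left turn, and Lemma~\ref{C:cylinder2} then yields a triangulated cylinder of length $\ge|P|-2r-3$ and width $w=|C_0|$, containing all but at most $w(w-1)$ vertices of $G$. The final arithmetic from Section~\ref{S:proof} carries through with $n_t$ chosen large enough to swallow the $O(t^2)$ and $O(w^2)$ overhead.

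The main obstacle is the tree-augmentation step: around each vertex $v$ of degree $>6$ one must route $\lceil\deg_G(v)/5\rceil$ short $v$-paths back into $T_0$ in a balanced cyclic pattern, without creating cycles in $T$ and without producing any leaf of degree $6$. This is a local planar routing problem that should yield to a careful greedy construction (using connectivity of $T_0$ and the finiteness of $N(v)$), but the details need to be verified; once this augmentation lemma is established, the Steiner-tree machinery of the original proof, together with the cylinder analysis, transfers essentially verbatim.
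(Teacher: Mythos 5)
First, a caveat: the paper does not actually prove this statement --- it appears only as Conjecture~\ref{c:t}, with a pointer to an unpublished manuscript of Liu and Pelsmajer --- so your sketch can only be judged on its own terms, not against a proof in the paper. The obstacle you flag yourself (augmenting the Steiner tree so that every copy of a cut vertex in $G'$ has degree at most $6$, and every leaf lies in $U$) is indeed necessary for the $G_\infty$-mapping half of the argument (Lemmas~\ref{C:map} and~\ref{new 6} and the $|V(G')|/7$ count all need it), and your sector-splitting idea is a plausible route, though as you say it is not yet a proof.

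The more serious gap is the sentence claiming that Lemma~\ref{C:cylinder2} ``yields a triangulated cylinder \ldots containing all but at most $w(w-1)$ vertices of $G$,'' with the leftover absorbed into the final arithmetic. That lemma assumes maximum degree $6$ globally, and its bound on the vertices beyond the two ends of the cylinder rests on Lemma~\ref{L:thank goodness}, whose proof needs $\sum(6-\deg(v))\ge 0$ over the enclosed vertices. Once degrees above $6$ are allowed, the cylinder stops exactly where the first vertex of $U$ appears, and the region beyond that end need not be small. Concretely: join a long triangulated tube of width $3$ and length $L_1$ to a long tube of width $9$ and length $L_2$ through a bounded transition region containing a few degree-$5$ and degree-$7$ vertices (the triangulation analogue of a pentagon--heptagon nanotube junction), and cap both far ends; all vertices of degree $\ne 6$ lie in the caps and the transition, so $t$ is a constant. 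Taking $L_1=2L_2$, both large, the middle $x$ of the longest branch-free path of the Steiner tree lies in the thin tube, the cylinder your argument extracts is essentially the thin tube, and the leftover is the entire wide tube, about $9L_2=\Theta(n)$ vertices; adding it wholesale to the dominating set gives a set of size well above $n/4$. So the leftover is not an $O(w^2)$ overhead that a large $n_t$ can swallow, and some genuinely new ingredient is needed --- for example, iterating the cylinder/cap decomposition on the residual regions, or dominating the pieces beyond the cylinder ends recursively. (A secondary, fixable point: the $q\le 2$ contradiction arguments and the use of Lemma~\ref{L:thank goodness} around $x$ also formally assume global maximum degree $6$, but there the argument can be localized to $N_{3r+1}[x]$, where all degrees are $6$.)
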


The major addition required to prove Conjecture~\ref{c:n/6} is a case analysis that,
for every triangulated cylinder from Lemma~\ref{C:cylinder2} with $3\le w\le 12$,
more carefully chooses $S_H$ and $S_Z$ above.
Recently, Liu and Pelsmajer claim to have verified both conjectures~\cite{LiPe}.

The following infinite family of graphs show that Conjecture~\ref{c:n/6} is best possible (up to the additive constant).

Let $n=6k$ for any positive integer $k$.  We construct a graph $G(n)$
on vertex set $\{v_1,\ldots,v_n\}$ as follows:
For  $1\le i\le n/3$ let the sets $S_i=\{v_{3i-2},v_{3i-1},v_{3i}\}$ induce
triangles, drawn concentrically in the plane.
For all $1\le j\le n-3$ add edges $v_jv_{j+3}$.
For all $1\le i<n/3$ add edges $v_{3i-2}v_{3i+2}$,
$v_{3i-1}v_{3i+3}$, $v_{3i}v_{3i+1}$.
Now $G(n)$ is a triangulation with maximum degree 6 (and exactly 6 vertices with
other degrees, each of degree 4).

\begin{proposition}
$G(n)$ has no dominating set with at most $n/6$ vertices.
\end{proposition}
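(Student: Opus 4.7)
The plan is to assume for contradiction that $D\subseteq V(G(n))$ is a dominating set with $|D|= k := n/6$, and to derive a contradiction by counting ``empty'' triangles among $S_1,\ldots,S_{2k}$. Write $d_i:=|D\cap S_i|$, so $\sum_{i=1}^{2k} d_i = k$.

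The first step is a local adjacency lemma, read off directly from the edge list defining $G(n)$: between any two consecutive triangles $S_{i-1}$ and $S_i$ there are exactly six cross-edges, and each vertex of $S_{i-1}$ is adjacent to exactly two of the three vertices of $S_i$ (and similarly for $S_{i+1}$). Consequently, if $d_i = 0$, then every vertex of $S_i$ must be dominated from outside, which forces
\[ d_{i-1}+d_{i+1} \;\ge\; 2, \]
under the convention $d_0=d_{2k+1}=0$. In particular, if $S_1$ is empty then $d_2\ge 2$, and symmetrically for $S_{2k}$.

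Next I would run a two-sided count on $Z:=\{i\,:\,d_i=0\}$. On the one hand, since every non-empty triangle contributes $\ge 1$ to $\sum d_i = k$, we have $|Z|\ge k$. On the other hand, summing the local inequality over $i\in Z$ and swapping the order of summation,
\[ 2|Z| \;\le\; \sum_{i\in Z}(d_{i-1}+d_{i+1}) \;=\; \sum_{j} d_j \cdot \bigl|\{j-1,j+1\}\cap Z\bigr| \;\le\; 2\sum_j d_j \;=\; 2k, \]
so $|Z|=k$. Equality is forced throughout the chain: $d_j=1$ for each $j\notin Z$, and $|\{j-1,j+1\}\cap Z|=2$ for each such $j$ --- i.e., every non-empty triangle must be sandwiched between two empty ones.

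The endgame is the boundary. The sandwich condition cannot hold at $j=1$ because $S_0$ does not exist, hence $1\in Z$; but then the local lemma applied to the empty $S_1$ requires $d_2\ge 2$, contradicting $d_j\le 1$ for all $j$ obtained in the previous step. (Symmetrically $2k\in Z$ would force $d_{2k-1}\ge 2$.) This contradiction completes the proof, so $\gamma(G(n))>k=n/6$.

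I expect the only real obstacle to be bookkeeping in the local adjacency lemma: one needs to enumerate the six cross-edges between $S_{i-1}$ and $S_i$ and verify that each single vertex on one side really misses exactly one vertex on the other side (rather than, say, being adjacent to all three after identifying endpoints). Once this is cleanly recorded, the double count and the boundary contradiction are entirely mechanical.
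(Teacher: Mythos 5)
Your proposal is correct and takes essentially the same approach as the paper: the identical local lemma (an empty $S_i$ forces at least two dominators in $S_{i-1}\cup S_{i+1}$), a global count that is tight and forces equality everywhere, and the same boundary contradiction at $S_1$, which has no $S_0$ to help dominate it. The only difference is bookkeeping---you double-count the empty triangles $Z$, while the paper uses the weights $c_i=|D\cap S_i|+\frac{1}{2}|D\cap(S_{i-1}\cup S_{i+1})|$---and the two counts are equivalent.
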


\begin{proof}
For a contradiction, suppose that $G(n)$ has a dominating set $D$ of
size at most $n/6$.  It is not hard to see that for each $1\le i\le n/3$,
if $D$ does not intersect $S_i$ then $D$ must contain at least two
vertices of $S_{i-1}\cup S_{i+1}$.   For each $1\le i\le n/3$, let $c_i$
equal the number of vertices of $D$ in $S_i$ plus half the number of
vertices of $D$ in $S_{i-1}\cup S_{i+1}$.  By the previous observation,
$c_i\ge 1$ for all $1\le i\le n/3$.  Hence $\Sigma=\sum_{i=1}^{n/3}
c_i\ge n/3$.

Each vertex of $D$ contributes at most $2$ to $\Sigma$, so $\Sigma\le
n/3$.  Therefore $\Sigma$ must equal $n/3$, which implies that $c_i=1$ for all $i$ and every
vertex of $D$ contributes exactly $2$ to $\Sigma$.  However, then $D$
cannot intersect $S_1$, and $D$ will contain at most one vertex of
$S_2$.  Then $S_1$ is not dominated.
\qed
\end{proof}

\end{document}